\newtheorem{theorem}{Theorem}[section]
\newtheorem{prop}[theorem]{Proposition}
\newtheorem{lem}[theorem]{Lemma}
\newtheorem{coro}[theorem]{Corollary}
\newtheorem{conj}[theorem]{Conjecture}
\newtheorem{thm}[theorem]{Theorem}
\newtheorem{rem}[theorem]{\rm\textsc{Remark}}
\newtheorem{exam}[theorem]{\rm\textsc{Example}}
\newcommand{\ideal}[1]{\ensuremath{\left\langle #1 \right\rangle}}
\newcommand{\bslash}{\kern-0.1em\texttt{\scalebox{0.6}[1]{/}}\kern-0.15em \texttt{\scalebox{0.6}[1]{/}}}
\DeclareMathOperator{\GL}{GL}
\DeclareMathOperator{\Tr}{Tr}
\DeclareMathOperator{\cha}{char}
\newcommand{\A}{\mathcal{A}} 
\newcommand{\B}{\mathcal{B}} 
\newcommand{\D}{\mathcal{D}} 
\newcommand{\C}{\mathbb{C}}
\newcommand{\N}{\mathbb{N}} 
\newcommand{\R}{\mathbb{R}} 
\newcommand{\F}{\mathbb{F}} 
\newcommand{\p}{\mathfrak{p}} 
\newcommand{\K}{\mathbb{K}} 
\newcommand{\LA}{\Longleftarrow} 
\newcommand{\RA}{\Longrightarrow} 
\newcommand{\ra}{\longrightarrow}
\newcommand{\hbo}{$\hfill\Diamond$} 
\begin{document}
\title{Geometry of Yang-Baxter matrix equations over finite fields} 
\def\shorttitle{Geometry of Yang-Baxter matrix equations over finite fields}

\author{Yin Chen}
\address{School of Mathematics and Physics, Key Laboratory of ECOFM of 
Jiangxi Education Institute, Jinggangshan University,
Ji'an 343009, Jiangxi, China \& Department of Finance and Management Science, University of Saskatchewan, Saskatoon, SK, Canada, S7N 5A7}
\email{yin.chen@usask.ca}

\author{Shaoping Zhu}
\address{School of Mathematics and Physics, Key Laboratory of ECOFM of 
Jiangxi Education Institute, Jinggangshan University, Ji'an 343009, Jiangxi, China}
\email{zhushaoping@jgsu.edu.cn}

\begin{abstract}
Let $A$ be a $2\times 2$ matrix over a finite field and consider the Yang-Baxter matrix equation $XAX=AXA$ with respect to $A$. We use a method of computational ideal theory to explore the geometric structure of the affine variety of all solutions to this equation. In particular, we exhibit all solutions explicitly and determine  cardinality formulas for these varieties. 
\end{abstract}

\date{\today}
\thanks{2020 \emph{Mathematics Subject Classification}. 15A24; 13P25; 14A25.}
\keywords{Yang-Baxter matrix equations; finite fields; rational canonical forms.}
\maketitle \baselineskip=16.2pt

\dottedcontents{section}[1.16cm]{}{1.8em}{5pt}
\dottedcontents{subsection}[2.00cm]{}{2.7em}{5pt}

\section{Introduction}
\setcounter{equation}{0}
\renewcommand{\theequation}
{1.\arabic{equation}}
\setcounter{theorem}{0}
\renewcommand{\thetheorem}
{1.\arabic{theorem}}

\noindent Let $\K$ be a field of any characteristic and $n\in\N^+$. Suppose that $M_n(\K)$ denotes the vector space of all $n\times n$ matrices over $\K$.  Given a matrix $A\in M_n(\K)$, the following equation 
\begin{equation}\tag{YBME}
\label{YBME}
XAX=AXA
\end{equation}
is called the \textit{Yang-Baxter matrix equation} with respect to $A$ over $\K$.  This matrix equation occupies a prominent place in both pure mathematics and mathematical physics, which originally stems from 
Yang’s work on the one-dimensional many-body problem in \cite{Yan67} and Baxter’s research in two-dimensional classical statistical mechanics in \cite{Bax72}, and its study has substantial ramifications in quantum groups, nonassociative algebras, and differential operator theory; see for example, \cite{CJdR10, Hiv00}, and \cite{BFP99}.

We write $\D_A(\K)$ for the affine variety of all  $n\times n$ matrix  solutions  to (\ref{YBME}). Apparently, if $A=0$, then $\D_A(\K)=M_n(\K)$. If $A\neq 0$, then  the zero matrix and the matrix $A$ both are solutions to (\ref{YBME}) and thus $|\D_A(\K)|\geqslant 2$. We say that a solution $X\in\D_A(\K)$ is \textit{nontrivial} if $X\neq 0$ and $X\neq A$. A central task in the study of \ref{YBME} is to characterize
the shape of a generic solution in $\D_A(\K)$ and to construct new nontrivial solutions within $\D_A(\K)$ for specific applications. 
There are extensive research conducted on this task in the characteristic zero case;  for recent developments over $\C$ or $\R$, see \cite{CCY19, Lu22, WWW24, GZ24}, and \cite{HM25}. In this article, we focus on the finite field case (i.e., $\K=\F_q$) and our primary objective is to explore the geometric structure of the affine variety $\D_A(\F_q)$ via capitalizing on 
a method from computational ideal theory.

Characterizing solutions of a matrix equation over finite fields and determining explicit formulas for their cardinalities 
have been a classical topic with a long history, dating back to the works of Carlitz and Hodges; see for example, \cite{Car73, Hod57, Hod58, Hod64}, and \cite{Hod66}. In contrast to the characteristic zero case, where $\D_A(\K)$ admits infinitely many nontrivial solutions, the finite field setting yields only finitely many solutions. This finiteness provides two notable advantages: it facilitates the explicit computation of the cardinality of $\D_A(\F_q)$, and it enables applications to various areas such as combinatorics, algebra, and the theory of combinatorial designs in statistics; see \cite{WZ01a} and \cite{WZ01b}.
In earlier work \cite{CZ23}, we used algebraic invariant theory to study the geometry of $\D_A(\F_q)$ when $A$ is a scalar matrix. This work has inspired several recent studies, including \cite{PDK22,CZ26, CR25} and \cite{Coh25}. The present article aims to extend this investigation to the case where $A$ ranges over all $2\times 2$-matrices. 

Throughout we assume that $\F_q$ denotes a finite field of order $q=p^s$ for some prime $p$ and $s\in\N^+$.
Given two matrices $A,B\in M_2(\F_q)$, we observe that if $A$ and $B$ are similar over $\F_q$, then $|\D_A(\F_q)|=|\D_B(\F_q)|$ and 
solutions in $\D_B(\F_q)$ can be obtained from elements in $\D_A(\F_q)$ via a conjugacy action; see Proposition \ref{prop2.1}
and Corollary \ref{coro2.2}.
This observation allows us to restrict our attention to  the affine variety $\D_A(\F_q)$, where $A$ denotes the rational canonical form of $B$. We will see below that the rational canonical form $A$ must be one of the following four types:
\begin{equation}
\label{ }
\left\{A_1:=\begin{pmatrix}
   c_1   &  0  \\
    0  &  c_2
\end{pmatrix}, ~~A_2:=\begin{pmatrix}
   c   &  1  \\
    0  &  c
\end{pmatrix},~~ A_3:=\begin{pmatrix}
   0   &  -b  \\
    1  &  a
\end{pmatrix}, ~~ A_4:=\begin{pmatrix}
   c   &  0  \\
    0  &  c
\end{pmatrix}\right\}
\end{equation}
where $a,b,c,c_1\neq c_2$ are elements in $\F_q$ and $x^2-ax+b$ is irreducible in $\F_q[x]$; see Proposition \ref{prop2.5}.  Note that the cardinality and geometry of $\D_{A_4}(\F_q)$ have been well understood, as shown in \cite{CZ23}. Thus, in this article, we mainly focus on the geometric structures and cardinalities for  $\D_{A_1}(\F_q), \D_{A_2}(\F_q)$, and $\D_{A_3}(\F_q)$.

The first main result can be summarized as follows.

\begin{thm}\label{mt1}
{\rm (1)} If exactly one of $\{c_1,c_2\}$ is zero, then $\D_{A_1}(\F_q)$ is a 2-dimensional affine variety consisting of two 2-dimensional irreducible components and one 1-dimensional irreducible component. In particular, $|\D_{A_1}(\F_q)|=2q^2$.

{\rm (2)} Assume that $c_1\neq c_2\in\F_q^\times$ and define $\updelta:=c_1^2  -c_1c_2+ c_2^2$. If $\updelta=0$, then 
$|\D_{A_1}(\F_q)|=2q+2$ and $\D_{A_1}(\F_q)$ is a 1-dimensional affine variety consisting of three isolated points and two 1-dimensional irreducible components. If $\updelta\neq0$, then 
$|\D_{A_1}(\F_q)|=q+3$ and $\D_{A_1}(\F_q)$ is a 1-dimensional affine variety consisting of four isolated points and one 1-dimensional irreducible component.
\end{thm}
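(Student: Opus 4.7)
The plan is to write $X = \sm{x_1}{x_2}{x_3}{x_4}$ and expand $XA_1X = A_1XA_1$ entrywise, obtaining four polynomial equations in $x_1,x_2,x_3,x_4$. The two off-diagonal equations factor as $x_2\,\ell = 0$ and $x_3\,\ell = 0$, where $\ell := c_1 x_1 + c_2 x_4 - c_1 c_2$, so the variety $\D_{A_1}(\F_q)$ splits naturally along $\{x_2 = x_3 = 0\}$ versus $\{\ell = 0\}$, while the diagonal equations provide the residual constraints $c_1 x_1^2 + c_2 x_2 x_3 = c_1^2 x_1$ and $c_1 x_2 x_3 + c_2 x_4^2 = c_2^2 x_4$. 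This factoring is the single observation that drives both parts.

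For part (1), since $\dia(c_1,c_2)$ and $\dia(c_2,c_1)$ are similar, Proposition \ref{prop2.1} reduces the argument to $c_2 = 0$, $c_1 \neq 0$. The system collapses to $x_1(x_1 - c_1) = 0$ together with $x_1 x_2 = x_1 x_3 = x_2 x_3 = 0$. The branch $x_1 = 0$ yields the union of two coordinate $2$-planes $\{x_1 = x_2 = 0\}$ and $\{x_1 = x_3 = 0\}$ meeting along a coordinate line, and the branch $x_1 = c_1$ forces $x_2 = x_3 = 0$ and gives an affine line disjoint from the first two. Inclusion-exclusion yields $q^2 + q^2 - q + q = 2q^2$, and these three loci are the claimed irreducible components.

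For part (2) both $c_1,c_2$ are nonzero. If $x_2 = x_3 = 0$, the diagonal equations force $x_1 \in \{0, c_1\}$ and $x_4 \in \{0, c_2\}$, producing four candidate isolated solutions. Otherwise $\ell = 0$, so $x_1$ is a linear function of $x_4$; substituting into the two diagonal equations and eliminating $x_2 x_3$ reduces to $(c_1 - c_2)\bigl[c_1^2 - (c_1-c_2)x_4\bigr] = 0$, which pins down $x_4 = c_1^2/(c_1-c_2)$, $x_1 = c_2^2/(c_2-c_1)$, and $x_2 x_3 = -c_1 c_2\,\updelta/(c_1-c_2)^2$ by direct computation.

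The main obstacle is to analyse how this second locus interacts with the four candidate isolated points. The key observation is that $(x_1,x_4) = (c_1, c_2)$ if and only if $\updelta = 0$, via an elementary identity. Consequently, when $\updelta = 0$ the second locus becomes the ``cross'' $\{x_2 = 0\} \cup \{x_3 = 0\}$ anchored at $(c_1, 0, 0, c_2)$, which absorbs exactly one of the four candidates and contributes two irreducible $1$-dimensional components, for a total of $(2q - 1) + 3 = 2q + 2$. When $\updelta \neq 0$, the equation $x_2 x_3 = $ nonzero constant is a geometrically irreducible affine conic with $q - 1$ rational points, disjoint from all four now-genuinely-isolated candidates, yielding $(q - 1) + 4 = q + 3$.
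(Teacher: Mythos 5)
Your proposal is correct and follows essentially the same route as the paper: expand the equation into the four quadratics, split into cases, identify the isolated points together with the one-dimensional families (the two coordinate planes and a line when one eigenvalue vanishes; the lines through $\mathrm{diag}(c_1,c_2)$ or the hyperbola $x_2x_3=\mathrm{const}$ otherwise), and count. Your only departure is organizational: you package the paper's nested subcase analysis as the single decomposition $V(x_2\ell,\,x_3\ell)=V(x_2,x_3)\cup V(\ell)$ with $\ell=c_1x_1+c_2x_4-c_1c_2$, which is a cleaner path to the same elimination of $x_2x_3$, the same components, and the same counts $2q^2$, $2q+2$, and $q+3$.
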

\noindent The proof of Theorem \ref{mt1} can be given by together Propositions \ref{prop3.1}, \ref{prop3.2} and \ref{prop3.3}.

To understand the geometric structure of $\D_{A_2}(\F_q)$, our argument will be separated into two cases: $c=0$ and $c\neq 0$. The following theorem is our second major result, which could be proved by combining the proofs of Propositions \ref{prop4.1} and \ref{prop4.2}.

\begin{thm}\label{mt2}
{\rm (1)}  If $c=0$, then $\D_{A_2}(\F_q)$ is of cardinality $2q^2-q$ and is a 2-dimensional affine variety consisting of 
two 2-dimensional irreducible components. 

{\rm (2)} If $c\neq 0$, then $\D_{A_2}(\F_q)$  is a 1-dimensional affine variety consisting of two isolated points and 
one 1-dimensional irreducible component. In particular, $|\D_{A_2}(\F_q)|=q+2$.
\end{thm}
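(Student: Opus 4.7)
The plan is to write $X = \begin{pmatrix} x_1 & x_2 \\ x_3 & x_4 \end{pmatrix}$ with indeterminate entries, expand $X A_2 X - A_2 X A_2$ entry-by-entry to obtain the four polynomial generators of the vanishing ideal of $\D_{A_2}(\F_q)$ in $\F_q[x_1, x_2, x_3, x_4]$, and then analyse their common zero set. The structural observation that drives everything is that the $(2,1)$-entry relation factors as $x_3(cx_1 + cx_4 + x_3 - c^2) = 0$, so the two parts of the theorem split naturally along $c = 0$ (where most of the equations collapse) and $c \neq 0$ (where this factorisation partitions the variety into two branches).

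For part (1), with $c = 0$, the $(2,1)$-entry reads $x_3^2 = 0$, forcing $x_3 = 0$; the $(1,1)$- and $(2,2)$-entries then vanish identically, and the $(1,2)$-entry collapses to $x_1 x_4 = 0$. Hence $\D_{A_2}(\F_q)$ is the zero set of $\{x_3,\ x_1 x_4\}$, which decomposes into the two $2$-dimensional irreducible components $V(x_1, x_3)$ and $V(x_3, x_4)$ intersecting along the $1$-dimensional line $V(x_1, x_3, x_4)$. A routine inclusion-exclusion count over $\F_q$ gives $|\D_{A_2}(\F_q)| = 2q^2 - q$.

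For part (2), with $c \neq 0$, I would split the variety along the factorisation of the $(2,1)$-entry into a \emph{flat branch} $\{x_3 = 0\}$ and a \emph{slanted branch} $\{x_3 = c(c - x_1 - x_4),\ x_3 \neq 0\}$. In the flat branch, the $(1,1)$- and $(2,2)$-entries become $cx_1(x_1 - c) = 0$ and $cx_4(x_4 - c) = 0$, forcing $x_1, x_4 \in \{0, c\}$; evaluating the $(1,2)$-entry at the four resulting combinations eliminates the mixed pairs $(c, 0)$ and $(0, c)$ (each produces the contradiction $0 = c^2$) and fixes $x_2 = 0$ at $(x_1, x_4) = (0, 0)$ and $x_2 = 1$ at $(x_1, x_4) = (c, c)$, yielding exactly the two isolated solutions $X = 0$ and $X = A_2$.

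In the slanted branch, substituting $x_3 = c(c - y)$ with $y := x_1 + x_4$ into the remaining equations reduces the $(1,1)$- and $(1,2)$-entries to
\[
c(1 - x_2)(y - c) = x_1 x_4 \quad \text{and} \quad c x_2(y - c) = c^2 - x_1 x_4,
\]
while the $(2,2)$-entry becomes a consequence of $(1,1)$. Adding the two displayed equations simultaneously eliminates $x_2$ and $x_1 x_4$ and yields $y = 2c$; back-substitution then pins down $x_3 = -c^2$ and $x_2 = (c - x_1)^2 / c^2$, with $x_4 = 2c - x_1$ and $x_1 \in \F_q$ free. This is the image of the affine line under a polynomial map, hence an irreducible $1$-dimensional rational curve contributing $q$ points, all with $x_3 = -c^2 \neq 0$ and therefore disjoint from the two flat isolated points; summing gives $|\D_{A_2}(\F_q)| = q + 2$ together with the claimed geometric decomposition. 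The main technical hurdle is recognising the additive manoeuvre that collapses the two cubic relations in the slanted branch to the single linear condition $y = 2c$; a direct Gröbner basis computation in the full ideal would also work but would be considerably more cumbersome.
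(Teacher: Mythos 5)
Your proposal is correct and follows essentially the same route as the paper: both arguments split on whether the $(2,1)$-entry's factor $x_{21}$ vanishes, recover the two isolated points $0$ and $A_2$ on the $x_{21}=0$ branch, and on the other branch derive $x_{21}=-c^2$ with the same one-parameter family (your $x_2=(c-x_1)^2/c^2$ is the paper's $x_{12}=c^{-2}x_{22}^2-2c^{-1}x_{22}+1$ after reparametrization), while the $c=0$ case gives the identical decomposition $V(x_{21},x_{11})\cup V(x_{21},x_{22})$ and count $2q^2-q$. The only cosmetic difference is in the elimination order (you substitute $y=x_1+x_4$ and add the two reduced relations to get $y=2c$ directly, where the paper first derives $\det X=c^2$), and the paper makes the irreducibility of the curve explicit by exhibiting its prime vanishing ideal rather than appealing to it being a polynomial image of the affine line.
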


To state the third main result, we write 
\begin{equation}
\label{ }
\Delta:=a^2-4b
\end{equation}
for the discriminant of $x^2-ax+b$.  The following theorem demonstrates that for all odd characteristic cases, there are no nontrivial solutions in $\D_{A_3}(\F_q)$  provided that $\Delta= -b$.  

\begin{thm}\label{mt3}
Assume that $\cha(\F_q)\neq 2$. If $\Delta= -b$, then $|\D_{A_3}(\F_q)|=2$.
\end{thm}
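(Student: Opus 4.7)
The plan is to translate $XA_3 X = A_3 X A_3$ into a system of four quadratic polynomial equations in the entries of $X$, and then to show, using the hypothesis $a^2 = 3b$ (the reformulation of $\Delta = -b$), that the only $\F_q$-rational points of the resulting affine variety are $X = 0$ and $X = A_3$.

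Before setting up the full polynomial system I would record two structural reductions that will cut the work considerably. Since $x^2 - ax + b$ is irreducible over $\F_q$, the matrix $A_3$ has no eigenvalue in $\F_q$, so its centralizer in $M_2(\F_q)$ equals the field $\F_q[A_3] \cong \F_{q^2}$; in particular $b = \det A_3 \neq 0$ makes $A_3$ invertible. For any solution $X$ that commutes with $A_3$, the equation rewrites as $A_3(X^2 - A_3 X) = 0$, and cancelling $A_3$ gives $X(X - A_3) = 0$, which in the field $\F_q[A_3]$ forces $X \in \{0, A_3\}$. It thus suffices to prove that, under $a^2 = 3b$, every solution must commute with $A_3$.

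Writing $X = \begin{pmatrix} x_1 & x_2 \\ x_3 & x_4 \end{pmatrix}$, a direct expansion produces four quadratic polynomials $f_1, f_2, f_3, f_4 \in \F_q[x_1, x_2, x_3, x_4]$ generating the defining ideal $I$ of $\D_{A_3}(\F_q)$, while the commutation $XA_3 = A_3 X$ is cut out by the two linear polynomials $g_1 := x_2 + b x_3$ and $g_2 := x_4 - x_1 - a x_3$. The central step is then a Gr\"obner-basis argument in $\F_q[x_1,x_2,x_3,x_4]$ modulo the relation $a^2 - 3b = 0$, in the spirit of the proofs of Propositions \ref{prop3.1}--\ref{prop4.2}: I would take a lex order such as $x_1 > x_2 > x_4 > x_3$, eliminate $x_2$ via $f_3$, and then reduce $f_1, f_2, f_4$. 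The hypothesis $a^2 - 3b = 0$ should enter precisely as the obstruction factor which, once imposed, allows $g_1$ and $g_2$ to be pulled out of the reduced polynomials, so that $g_1, g_2$ lie in $\sqrt{I}$; combined with the previous paragraph this pins the variety to $\{0, A_3\}$.

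The main obstacle will be executing this reduction cleanly, since the intermediate polynomials grow to degree four in three variables and one must verify the commutation-factorization by careful bookkeeping. Particular care is needed in characteristic three, where $a^2 = 3b$ degenerates to $a = 0$ (otherwise $\Delta = a^2$ would be a square, contradicting irreducibility); in that regime $A_3^2 = -b I$, the four defining polynomials collapse to a much shorter system, and a direct elimination handles the case without any Gr\"obner machinery. Once commutation is forced in both regimes, the count $|\D_{A_3}(\F_q)| = 2$ follows immediately from the structural reduction above.
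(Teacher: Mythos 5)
Your structural reduction is correct and is a nice observation not made explicit in the paper: the centralizer of $A_3$ is the field $\F_q[A_3]\cong\F_{q^2}$, so any solution commuting with $A_3$ satisfies $X(X-A_3)=0$ in a field and must be $0$ or $A_3$. The gap is in your central step, where you propose to certify by a Gr\"obner-basis reduction that the commutator forms $g_1=x_2+bx_3$ and $g_2=x_4-x_1-ax_3$ lie in $\sqrt{I}$, $I=\ideal{f_1,\dots,f_4}$. That membership is false, so no amount of bookkeeping will make the normal forms vanish: radical membership is a statement about all solutions over $\overline{\F_q}$, and over the algebraic closure the solution set is positive-dimensional and contains non-commuting points. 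Concretely, let $\omega$ be a root of $x^2+x+1$ and set
\[
X=\begin{pmatrix} a & b(\omega+1)\\ \omega & 0\end{pmatrix}.
\]
A two-line computation using only $\omega^2+\omega+1=0$ gives $XA_3X=A_3XA_3=\begin{pmatrix}0 & b^2\omega\\ b\omega+b & 0\end{pmatrix}$, while $g_1(X)=b(2\omega+1)\neq 0$ for $p\neq 3$. (Under your hypothesis $a^2=3b$ with $a\neq 0$, the non-squareness of $\Delta=-b$ forces $-3$ to be a non-square, so $\omega\in\F_{q^2}\setminus\F_q$ and this $X$ does not contradict the theorem --- but it does lie in $V(I)(\overline{\F_q})$, indeed on the component $V(\p_1)$ of the paper's decomposition, so $g_1\notin\sqrt{I}$.)

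What is true is only the arithmetic statement that the \emph{$\F_q$-rational} solutions commute with $A_3$, and that cannot be extracted from the ideal $I$ alone; one would have to adjoin the field equations $x_i^q-x_i$, which destroys the uniform-in-$q$ computation you envisage. This is precisely where the paper's proof does different work: it decomposes $\sqrt{J}=\p_1\cap\p_2\cap\p_3$, observes that $V(\p_3)=\{0\}$ and that $V(\p_2)$ has no $\F_q$-points because $x_{22}^2-ax_{22}+b$ is irreducible, and then analyzes the conic $V(\p_1)$ by computing that its discriminant as a quadratic in $x_{21}$ equals $(x_{22}-a)^2/\Delta$; since $\Delta$ is a non-square this forces $x_{22}=a$ and then $x_{21}=1$, leaving only $A_3$. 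The non-squareness of $\Delta$ is exactly the input your plan never uses, and without it the conclusion fails (already over $\F_{q^2}$, as the point above shows). Your commuting-solutions lemma would make a clean finish if you first proved, by such a rationality argument, that every $X\in\D_{A_3}(\F_q)$ commutes with $A_3$; as written, the proposed route to that claim cannot be carried out.
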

\noindent We also propose a conjecture about the cardinality of $\D_{A_3}(\F_q)$ for the case $\Delta\neq -b$.

We organize this article as follows.  In Section \ref{sec2}, we provide some fundamental properties on matrix similarity and conjugacy actions that are helpful in understanding the geometry of $\D_{A}(\F_q)$ and its quotient spaces. In particular, we give a detailed discussion about rational canonical forms of $2\times 2$ matrices over $\F_q$. Sections \ref{sec3}, \ref{sec4}, and \ref{sec5} are devoted to proofs of Theorem \ref{mt1}, \ref{mt2}, and \ref{mt3}, respectively. In Section \ref{sec6}, we present some examples and explicit calculations for small primes $q$. Moreover, together with Remark \ref{rem6.4}, a conjecture on $|\D_{A_3}(\F_q)|$ (when $\Delta\neq -b$) is raised in this section.

\vspace{2mm}
\noindent \textbf{Summary of contributions}. 
The following is a summary of our contributions in the article:
\begin{enumerate}
  \item We give a complete understanding for the variety $\D_A(\F_q)$ of solutions to the Yang-Baxter $2\times 2$-matrix equation $XAX=AXA$ over a finite field $\F_q$, unless a special case of $\Delta\neq -b$ as stated in Conjecture \ref{conj6.5} below. 
  \item We derive the explicit cardinality formula of the solution variety $\D_A(\F_q)$ and characterize the geometric structure of  $\D_A(\F_q)$ as an affine variety.  All main results are stated in Theorem \ref{mt1}, \ref{mt2}, and \ref{mt3}.
  \item Enhancing existing methods, we use a method from computational ideal theory to describe the geometric structure of
  $\D_A(\F_q)$. 
  \item Several explicit examples for small prime fields are presented in Section \ref{sec6}. 
\end{enumerate} 

\vspace{2mm}
\noindent\textbf{Conventions.}  Throughout this article,  $\K$ denotes a field of any characteristic and $\F_q$ denotes a finite field of order $q$. We always assume that $A$ is a nonzero matrix over either $\K$ or $\F_q$. We write $R:=\F_q[x_{11},x_{12},x_{21},x_{22}]$ for the polynomial ring over $\F_q$ and  denote by $\ideal{S}$  the ideal of $R$ generated by a subset $S\subseteq R$.

\section{General Properties} \label{sec2}
\setcounter{equation}{0}
\renewcommand{\theequation}
{2.\arabic{equation}}
\setcounter{theorem}{0}
\renewcommand{\thetheorem}
{2.\arabic{theorem}}

\noindent This preliminary section presents some basic properties on matrix similarity and conjugacy actions. These properties allow us to reduce the study of $\D_A(\K)$ to the case where $A$ denotes a rational canonical form over $\F_q$. A detailed discussion of rational canonical forms of size $2$ over $\F_q$  is also provided. 

We denote by $\GL_n(\K)$ the general linear group of degree $n$ over $\K$.

\begin{prop}\label{prop2.1}
Let $A$ and $B$ be two similar matrices of degree $n$ over $\K$. Then there exists a one-to-one correspondence between
$\D_A(\K)$ and $\D_B(\K)$.  
\end{prop}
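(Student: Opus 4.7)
The plan is to exploit the similarity relation directly: since $A$ and $B$ are similar over $\K$, there exists $P\in\GL_n(\K)$ with $B=P^{-1}AP$, and I will show that conjugation by $P$ carries the solution set $\D_A(\K)$ bijectively onto $\D_B(\K)$. Specifically, I would define
\[
\varphi:\D_A(\K)\longrightarrow M_n(\K),\qquad \varphi(X):=P^{-1}XP.
\]

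The key verification is that $\varphi(X)$ actually lies in $\D_B(\K)$ whenever $X\in\D_A(\K)$. For this I would compute both sides of the Yang-Baxter equation for $B$ and $Y:=P^{-1}XP$, obtaining
\[
YBY=P^{-1}XP\cdot P^{-1}AP\cdot P^{-1}XP=P^{-1}(XAX)P
\]
and similarly $BYB=P^{-1}(AXA)P$. The hypothesis $XAX=AXA$ then forces $YBY=BYB$, so $\varphi(X)\in\D_B(\K)$.

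To finish, I would note that the inverse map $\psi:\D_B(\K)\to\D_A(\K)$ defined by $\psi(Y):=PYP^{-1}$ is well-defined by the same calculation (with the roles of $A$ and $B$, and of $P$ and $P^{-1}$, interchanged, since $A=PBP^{-1}$). Since $\psi\circ\varphi=\mathrm{id}_{\D_A(\K)}$ and $\varphi\circ\psi=\mathrm{id}_{\D_B(\K)}$, the map $\varphi$ is the desired one-to-one correspondence. No genuine obstacle arises; the argument is a one-line conjugation check, and the only thing to keep in mind is that the bijection depends on the choice of $P$, which is why the corollary about transporting solutions via a conjugacy action (Corollary \ref{coro2.2}) is stated separately in the sequel.
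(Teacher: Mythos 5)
Your proposal is correct and follows essentially the same route as the paper: conjugation by $P$ sends $\D_A(\K)$ into $\D_B(\K)$ via the computation $(P^{-1}XP)B(P^{-1}XP)=P^{-1}(XAX)P=P^{-1}(AXA)P=B(P^{-1}XP)B$, and the inverse conjugation provides the two-sided inverse. Your explicit verification that $\psi=\varphi^{-1}$ is well-defined is slightly more detailed than the paper's remark that the conjugacy action is automatically bijective, but the argument is the same.
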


\begin{proof}
Suppose that $B=P^{-1}AP$ for some $P\in\GL_n(\K)$. The map 
\begin{equation}\label{eq2.1}
\upvarphi_P: \D_A(\K)\ra\D_B(\K)
\end{equation}
defined by $X\mapsto P^{-1}XP$ for all $X\in \D_A(\K)$, is a bijective map.  The first fact we have to show is that
$\upvarphi_P(X)=P^{-1}XP\in\D_B(\K)$ for all $X\in \D_A(\K)$. In fact,  
\begin{eqnarray*}
(P^{-1}XP)B(P^{-1}XP)&=&P^{-1}X(PBP^{-1})XP=P^{-1}(XAX)P\\
&=&P^{-1}(AXA)P=(P^{-1}AP)(P^{-1}XP)(P^{-1}AP)\\
&=&B(P^{-1}XP)B.
\end{eqnarray*}
Hence, $P^{-1}XP\in\D_B(\K)$ for all $X\in \D_A(\K)$. Note that the map $\upvarphi_P$ is given by the conjugacy action, thus it is bijective. This completes the proof.
\end{proof}

\begin{coro} \label{coro2.2}
If $A$ and $B$ are similar over $\K$, then $|\D_A(\K)|=|\D_B(\K)|$.  
\end{coro}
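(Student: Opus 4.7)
The plan is to derive this immediately from Proposition \ref{prop2.1}. That proposition already constructs, for any $P\in\GL_n(\K)$ realizing the similarity $B=P^{-1}AP$, an explicit bijection
\[
\upvarphi_P:\D_A(\K)\longrightarrow\D_B(\K),\qquad X\longmapsto P^{-1}XP.
\]
Since cardinality is a bijection invariant (in either the finite or infinite sense), the existence of $\upvarphi_P$ forces $|\D_A(\K)|=|\D_B(\K)|$, which is exactly the claim.

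Thus my entire ``proof'' will be one line: by Proposition \ref{prop2.1}, $\D_A(\K)$ and $\D_B(\K)$ are in one-to-one correspondence, hence they have the same cardinality. There is really no obstacle to overcome; the only reason to state this as a separate corollary is that later sections (when we restrict to $\K=\F_q$ and compute explicit cardinality formulas) will invoke the numerical equality $|\D_A(\F_q)|=|\D_B(\F_q)|$ rather than the full bijection, so it is convenient to have a named reference for it. The fact that $\upvarphi_P$ is given by a conjugacy action also tells us the bijection respects natural group-theoretic structure, but for the cardinality statement we need nothing beyond bijectivity.

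If anything deserves a comment, it is that the corollary is independent of the choice of $P$: different choices of $P$ produce different bijections $\upvarphi_P$, but all of them witness the same equality of cardinalities, so the statement is unambiguous. Therefore the corollary follows and no further argument is required.
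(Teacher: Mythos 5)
Your proposal is correct and matches the paper exactly: the corollary is stated without a separate proof precisely because it is an immediate consequence of the bijection $\upvarphi_P$ constructed in Proposition \ref{prop2.1}. Nothing further is needed.
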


Fix a matrix $A\in M_n(\K)$ and consider the conjugacy action of $\GL_n(\K)$ on $M_n(\K)$:
\begin{equation}\label{ }
(Q,Y)\mapsto QYQ^{-1}
\end{equation}
for all $Q\in \GL_n(\K)$ and $Y\in M_n(\K)$. In general,  $\D_A(\K)$ is not stable under this action of $\GL_n(\K)$. 
However, if we write $G_A$ for the stabilizer subgroup of $A$ in $\GL_n(\K)$, i.e.,
$$G_A:=\{Q\in \GL_n(\K)\mid QAQ^{-1}=A\},$$
then the following result shows that $\D_A(\K)$ is stable under the conjugacy action of $G_A$, extending \cite[Proposition 2.1]{CZ23}.

\begin{prop} \label{prop2.3}
For any $A\in M_n(\K)$, $\D_A(\K)$ is stable under the conjugacy action of $G_A$. 
\end{prop}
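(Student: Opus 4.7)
The plan is to verify directly that conjugation by an element $Q \in G_A$ sends solutions of \eqref{YBME} to solutions, by exploiting the single relation $QAQ^{-1}=A$ repeatedly to slide $Q$ and $Q^{-1}$ through $A$-factors.

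More concretely, I would fix $X \in \D_A(\K)$ and $Q \in G_A$, and set $Y := QXQ^{-1}$. The goal is to check that $YAY=AYA$. The key observation is that $Q \in G_A$ can be rephrased as $A = QAQ^{-1}$, so every occurrence of $A$ in the expressions $YAY$ and $AYA$ may be rewritten in ``sandwich'' form. Substituting this identity into
\[
YAY = (QXQ^{-1})(QAQ^{-1})(QXQ^{-1}),
\]
the interior $Q^{-1}Q$ factors cancel and one is left with $Q(XAX)Q^{-1}$. Using that $X$ solves \eqref{YBME}, this equals $Q(AXA)Q^{-1}$, which by inserting $Q^{-1}Q = I$ between each pair of factors rearranges to $(QAQ^{-1})(QXQ^{-1})(QAQ^{-1}) = AYA$.

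This is essentially a one-line calculation, modeled on the computation already carried out in the proof of Proposition \ref{prop2.1}; the only conceptual point worth emphasizing is the difference between the setting there and here. In Proposition \ref{prop2.1} one moves between two different varieties $\D_A(\K)$ and $\D_B(\K)$ with $B = P^{-1}AP$, whereas here the defining matrix $A$ itself is preserved by the conjugation, which is exactly the reason stability (rather than merely a bijection with another solution set) holds. There is no real obstacle: once the identity $A = QAQ^{-1}$ is invoked at the correct places, the verification is mechanical, and the fact that $\upvarphi_Q : X \mapsto QXQ^{-1}$ is a bijection of $M_n(\K)$ immediately upgrades the stability statement to a $G_A$-action on $\D_A(\K)$.
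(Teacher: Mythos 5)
Your proposal is correct and follows essentially the same route as the paper's proof: both rewrite the middle $A$ using the stabilizer relation (you write it as $A=QAQ^{-1}$, the paper as $QA=AQ$), cancel the interior $Q^{-1}Q$ factors to get $Q(XAX)Q^{-1}$, apply the equation for $X$, and reassemble to $AYA$. No substantive difference.
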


\begin{proof}
It suffices to show that $QXQ^{-1}\in \D_A(\K)$ for all $Q\in G_A$ and $X\in \D_A(\K)$. Since $QA=AQ$, it follows that
\begin{eqnarray*}
(QXQ^{-1})A(QXQ^{-1})& = & QXQ^{-1}(QA)XQ^{-1}\\
 & = & Q(XAX)Q^{-1}=Q(AXA)Q^{-1}\\
 &=&A(QXQ^{-1})A.
\end{eqnarray*}
Here, the last equation holds because $A$ and $Q^{-1}$ commute as well. 
\end{proof}

\begin{rem}{\rm
The geometry of a quotient space $\D_A(\K)\bslash G_A$ is not the main topic of this article. However, as we have seen in \cite{CZ23}, stabilizer subgroups and the stability of $\D_A(\K)$ would play a key role in understanding the geometric structure of $\D_A(\K)\bslash G_A$. 
\hbo}\end{rem}

The rest of this section is to explore all rational canonical forms of $2\times 2$ matrices over $\F_q$. 
Let us recap two fundamental facts in Linear Algebra that (1) two matrices are similar if and only if they have the same rational canonical form; (2) The rational canonical form of a matrix is determined by the minimal polynomial of this matrix. Thus,  to find all possible rational canonical forms in $M_2(\F_q)$, we need to consider characteristic polynomials and minimal polynomials of matrices. 

Suppose that $B\in M_2(\F_q)$ denotes an arbitrary matrix. We write $m_B(x)$ and $c_B(x)$ for the minimal polynomial and the characteristic polynomial of $B$ respectively. Note that  $m_B(x)$ and $c_B(x)$ both are monic polynomials of degree 2 in $\F_q[x]$. In particular, $m_B(x)$ divides $c_B(x)$. 

We may assume that $c_B(x)=x^2-ax+b$, where $a=\Tr(B)$ and $b=\det(B)$. We denote by $r_B$ the number of distinct  roots of $c_B(x)$ in $\F_q$. Clearly, $0\leqslant r_B\leqslant 2$.  Our arguments can be separated into the following three cases:

\textsc{Case 1}: $r_B=2$. We may assume that $c_B(x)$ has two distinct roots $c_1\neq c_2$ in $\F_q$. Then $c_B(x)=(x-c_1)(x-c_2)$ splits in $\F_q$ and  $m_B(x)=c_B(x)=(x-c_1)(x-c_2)$. Thus the rational canonical form of $B$
is $$A_1=\begin{pmatrix}
   c_1   &  0  \\
    0  &  c_2
\end{pmatrix}$$
which means that $B$ is diagonalizable.

\textsc{Case 2}: $r_B=1$. In this case, $c_B(x)=(x-c)^2$ must split in $\F_q$ and it has one repeated root $c$ in $\F_q$. There are two subcases: $m_B(x)=x-c$ and $m_B(x)=(x-c)^2$. For the first subcase, the rational canonical form of $B$
is $$A_4=\begin{pmatrix}
   c   &  0  \\
    0  &  c
\end{pmatrix}.$$
In the second subcase, the rational canonical form of $B$ will be a Jordan block of size 2:
$$A_2=\begin{pmatrix}
   c   &  1  \\
    0  &  c
\end{pmatrix}.$$

\textsc{Case 3}: $r_B=0$. In this case, $c_B(x)$ is irreducible, and so $m_B(x)=c_B(x)=x^2-ax+b$. Thus the rational canonical form of $B$
is $$A_3=\begin{pmatrix}
   0   &  -b  \\
    1  &  a
\end{pmatrix}.$$

This also proves the following result. 

\begin{prop} \label{prop2.5}
The rational canonical form of a matrix $B\in M_2(\F_q)$ must be  one of the following four types:
\begin{equation}
\label{ }
\left\{\begin{pmatrix}
   c_1   &  0  \\
    0  &  c_2
\end{pmatrix},  ~~\begin{pmatrix}
   c   &  1  \\
    0  &  c
\end{pmatrix},~~ \begin{pmatrix}
   0   &  -b  \\
    1  &  a
\end{pmatrix},~~\begin{pmatrix}
   c   &  0  \\
    0  &  c
\end{pmatrix}\right\}
\end{equation}
where $a,b,c,c_1\neq c_2$ are elements in $\F_q$ and $x^2-ax+b$ is irreducible in $\F_q[x]$.
\end{prop}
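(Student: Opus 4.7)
The plan is to invoke the classification of similarity classes in $M_2(\F_q)$ by rational canonical form, which in the $2\times 2$ case is determined entirely by the minimal polynomial. Write $c_B(x) = x^2 - ax + b$ for the characteristic polynomial, where $a = \Tr(B)$ and $b = \det(B)$. Since $m_B(x)$ is monic and divides $c_B(x)$, and both are of degree at most $2$, the problem reduces to tabulating the possible pairs $(m_B, c_B)$ and classifying them according to how $c_B(x)$ factors over $\F_q$.

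First I would partition the matrices in $M_2(\F_q)$ according to $r_B \in \{0,1,2\}$, the number of distinct roots of $c_B(x)$ in $\F_q$. When $r_B = 2$, the polynomial splits with two simple roots $c_1 \neq c_2$, forcing $m_B = c_B = (x-c_1)(x-c_2)$; the single invariant factor has distinct linear factors, so $B$ is diagonalizable and its RCF is $A_1$. When $r_B = 0$, the polynomial has no root in $\F_q$ and is therefore irreducible, so again $m_B = c_B$ and the RCF is the companion matrix $A_3$. The case $r_B = 1$ is the only one that bifurcates: then $c_B(x) = (x-c)^2$ and $m_B$ is either $x - c$, which gives the scalar form $A_4$, or $(x-c)^2$, whose invariant factor decomposition consists of a single block of size $2$, giving the Jordan form $A_2$.

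The argument is essentially a finite enumeration once the standard fact is invoked that the RCF of a $2\times 2$ matrix is determined by its minimal polynomial together with its size. There is no substantial obstacle; the only subtlety worth a line of justification is the $r_B = 1$ branch, where one must confirm that the two possibilities for $m_B$ yield genuinely non-similar matrices. This is immediate because $A_4 = cI$ is central in $M_2(\F_q)$ while $A_2$ is not; equivalently, $A_4$ annihilates $x - c$ whereas $A_2$ does not, so they lie in different similarity classes. Assembling the four cases packages $A_1, A_2, A_3, A_4$ as an exhaustive list of RCF representatives, which is exactly the claim.
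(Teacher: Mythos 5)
Your proof is correct and follows essentially the same route as the paper: partition by the number $r_B$ of distinct roots of $c_B$ in $\F_q$ and read off the rational canonical form from the minimal polynomial in each case. (If anything, your handling of the $r_B=2$ case is slightly more careful than the paper's, which misstates the minimal polynomial there as a linear factor rather than the full product $(x-c_1)(x-c_2)$.)
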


\section{Geometry of $\D_{A_1}(\F_q)$} \label{sec3}
\setcounter{equation}{0}
\renewcommand{\theequation}
{3.\arabic{equation}}
\setcounter{theorem}{0}
\renewcommand{\thetheorem}
{3.\arabic{theorem}}

\noindent In this section, we deal with the situation of $A=A_1=\begin{pmatrix}
   c_1   &  0  \\
    0  &  c_2
\end{pmatrix}$
for two distinct elements $c_1$ and $c_2$ in $\F_q$. Our main goal  is to give a proof to Theorem \ref{mt1}. Precisely, we would like to study the irreducible components of $\D_{A_1}(\F_q)$ and determine the cardinality of $\D_{A_1}(\F_q)$. Note that
$c_1\neq c_2$, we may divide the proof  of Theorem \ref{mt1} into two cases: (1)  exactly one of $\{c_1,c_2\}$ is zero; (2) $c_1$ and $c_2$ both are nonzero.

Let us begin with an arbitrary solution $X=\begin{pmatrix}
   x_{11}   &  x_{12}  \\
    x_{21}  &  x_{22}
\end{pmatrix}\in\D_{A_1}(\F_q)$. Substituting $X$ to (\ref{YBME}) obtains 
$$XA_1X-A_1XA_1=\begin{pmatrix}
   x_{11}   &  x_{12}  \\
    x_{21}  &  x_{22}
\end{pmatrix}\begin{pmatrix}
   c_1   &  0  \\
    0  &  c_2
\end{pmatrix}\begin{pmatrix}
   x_{11}   &  x_{12}  \\
    x_{21}  &  x_{22}
\end{pmatrix}-\begin{pmatrix}
   c_1   &  0  \\
    0  &  c_2
\end{pmatrix}\begin{pmatrix}
   x_{11}   &  x_{12}  \\
    x_{21}  &  x_{22}
\end{pmatrix}\begin{pmatrix}
   c_1   &  0  \\
    0  &  c_2
\end{pmatrix}=0.$$
A direct computation shows that these $x_{ij}$ satisfy the following quadratic equations:
\begin{equation} \label{eq3.1}
\begin{aligned}
c_1x_{11}x_{12} -c_1c_2x_{12}  + c_2x_{12}x_{22}&=0 
&c_1x_{11}x_{21}  -c_1c_2x_{21} + c_2x_{21}x_{22}&=0 \\
 c_1x_{12}x_{21} + c_2x_{22}^2 -c_2^2x_{22}&=0 
 &c_1x_{11}^2 -c_1^2x_{11}  + c_2x_{12}x_{21}&=0.
\end{aligned}
\end{equation}

\subsection{Exactly one of $\{c_1,c_2\}$ is zero} 
For this case, without loss of generality, we may assume that $c_2=0$ and $c_1\neq 0$. Then the four equations in (\ref{eq3.1}) can be reduced to:
\begin{equation} \label{eq3.2}
\begin{aligned}
x_{11}x_{12}&=0 &x_{11}x_{21}&=0 \\
x_{12}x_{21}&=0 &x_{11}^2 -c_1x_{11}&=0.
\end{aligned}
\end{equation}

An elementary method can compute the cardinality of $\D_{A_1}(\F_q)$ in this case. 

\begin{prop}\label{prop3.1}
If exactly one of $\{c_1,c_2\}$ is zero, then $|\D_{A_1}(\F_q)|=2q^2$.
\end{prop}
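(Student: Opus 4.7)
The plan is to count the $\F_q$-points of the affine scheme cut out by the reduced system \eqref{eq3.2} directly, by branching on the value of $x_{11}$. Since $c_1\neq 0$, the fourth equation $x_{11}^2-c_1x_{11}=0$ has exactly two roots $x_{11}=0$ and $x_{11}=c_1$, and these produce disjoint contributions to $\D_{A_1}(\F_q)$. So the entire count reduces to tallying the solutions in each of the two slices
\[
V_0:=\{X\in\D_{A_1}(\F_q)\mid x_{11}=0\}\quad\text{and}\quad V_1:=\{X\in\D_{A_1}(\F_q)\mid x_{11}=c_1\},
\]
and summing.

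On $V_1$, substituting $x_{11}=c_1$ into the first two equations of \eqref{eq3.2} and using $c_1\neq 0$ forces $x_{12}=x_{21}=0$, which in turn makes the third equation vacuous; thus $x_{22}$ is the only free parameter and $|V_1|=q$. On $V_0$, setting $x_{11}=0$ kills the first two equations as well as the fourth automatically, leaving only the single constraint $x_{12}x_{21}=0$ on $(x_{12},x_{21})$ with $x_{22}$ completely free. A standard inclusion-exclusion on the union of the two coordinate axes in the $(x_{12},x_{21})$-plane yields $|\{x_{12}x_{21}=0\}|=2q-1$ in $\F_q^2$, so $|V_0|=(2q-1)q=2q^2-q$.

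Adding the two contributions gives $|\D_{A_1}(\F_q)|=(2q^2-q)+q=2q^2$, proving the proposition. There is no genuine obstacle here; the only care needed is to keep the two cases disjoint (guaranteed by $c_1\neq 0$) and to avoid double-counting the origin of the $(x_{12},x_{21})$-plane when applying inclusion-exclusion. This case analysis will also set up the geometric description in Theorem \ref{mt1}(1), since $V_0$ is evidently the union of two $2$-dimensional affine planes meeting along a line (matching the claimed two $2$-dimensional components), while $V_1$ is a single affine line (matching the claimed $1$-dimensional component).
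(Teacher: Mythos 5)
Your proof is correct and follows essentially the same route as the paper: both branch on the two roots $x_{11}=0$ and $x_{11}=c_1$ of the fourth equation in \eqref{eq3.2}, observe that the latter forces $x_{12}=x_{21}=0$ with $x_{22}$ free, and count the former slice via the locus $x_{12}x_{21}=0$ (your inclusion-exclusion giving $(2q-1)q$ is just a repackaging of the paper's $|D_1|+|D_2\setminus D_1|=q^2+q(q-1)$). No changes needed.
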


\begin{proof}
If $x_{11}=0$, then it follows from (\ref{eq3.2}) that either $x_{12}=0$ or $x_{21}=0$. This provides two types of solutions in
$\D_{A_1}(\F_q)$:  
$$D_1:=\left\{\begin{pmatrix}
   0   & 0   \\
   a   &  b
\end{pmatrix}~\Big|~ a,b\in\F_q\right\}\textrm{ and }D_2:=\left\{\begin{pmatrix}
    0  &  a \\
     0 &  b
\end{pmatrix}~\Big|~ a,b\in\F_q\right\}$$
respectively. Clearly, $|D_1|=q^2$ and $|D_2\setminus D_1|=q(q-1)$. If $x_{11}\neq 0$, then (\ref{eq3.2}) implies that
$x_{12}=x_{21}=0$ and $x_{11}=c_1$. We obtain the third type of solution in $\D_{A_1}(\F_q)$ as follows:
$$D_3:=\left\{\begin{pmatrix}
    c_1  &  0  \\
    0  & a
\end{pmatrix}~\Big|~ a\in\F_q\right\}.$$
Note that $|D_3|=q$. Since $\D_{A_1}(\F_q)=D_1\cup D_2\cup D_3$ and $D_3\cap (D_2\cup D_1)=\emptyset$, we see that
$$|\D_{A_1}(\F_q)|=|D_1|+|D_2\setminus D_1|+|D_3\setminus(D_2\cup D_1)|=q^2+q(q-1)+q$$
which simplifies to $2q^2$, as desired.
\end{proof}

\begin{prop}\label{prop3.2}
Assume that exactly one of $\{c_1,c_2\}$ is zero. Then $\D_{A_1}(\F_q)$ is a 2-dimensional affine variety consisting of 
two 2-dimensional irreducible components and one 1-dimensional irreducible component. In particular, the vanishing ideal
of $\D_{A_1}(\F_q)$ is generated by $$\{x_{11}x_{12}, x_{11}x_{21}, x_{12}x_{21}, x_{11}^2 -c_1x_{11}\}.$$
\end{prop}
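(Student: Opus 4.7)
The plan is to lift the set-theoretic description $\D_{A_1}(\F_q) = D_1 \cup D_2 \cup D_3$ from the proof of Proposition \ref{prop3.1} to the ideal level, by showing that the ideal $I := \ideal{x_{11}x_{12}, x_{11}x_{21}, x_{12}x_{21}, x_{11}^2 - c_1 x_{11}} \subset R$ generated by the equations (\ref{eq3.2}) is radical with explicit prime decomposition $I = P_1 \cap P_2 \cap P_3$, where $P_1 = \ideal{x_{11}, x_{12}}$, $P_2 = \ideal{x_{11}, x_{21}}$, and $P_3 = \ideal{x_{11} - c_1, x_{12}, x_{21}}$. Each $P_i$ is prime because $R/P_i$ is a polynomial ring over $\F_q$, and by inspection $V(P_i) = D_i$ for $i = 1, 2, 3$.

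After checking $I \subseteq P_1 \cap P_2 \cap P_3$ by direct substitution, the crux is the reverse containment. I would first establish the auxiliary identity $P_1 \cap P_2 = \ideal{x_{11}, x_{12} x_{21}}$ by reducing modulo $x_{11}$ and invoking unique factorization in $\F_q[x_{12}, x_{21}, x_{22}]$. Given $f \in P_1 \cap P_2 \cap P_3$, write $f = a\, x_{11} + b\, x_{12} x_{21}$; the requirement $f \in P_3$, evaluated at $(x_{11}, x_{12}, x_{21}) = (c_1, 0, 0)$, gives $c_1 \cdot a(c_1, 0, 0, x_{22}) = 0$, and since $c_1 \neq 0$ this forces $a \in P_3$. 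Expanding $a = a_0 (x_{11} - c_1) + a_1 x_{12} + a_2 x_{21}$ then rewrites $f$ as an explicit $R$-linear combination of the four generators of $I$, confirming $f \in I$.

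Because an intersection of prime ideals is automatically radical, the identity $I = P_1 \cap P_2 \cap P_3$ yields the irreducible decomposition of $\D_{A_1}(\F_q)$ as soon as we verify that no $P_i$ contains another; this is immediate from $c_1 \neq 0$ (which separates $P_3$ from both $P_1$ and $P_2$) and the fact that $P_1, P_2$ have distinct generators. The dimension counts $\dim R/P_1 = \dim R/P_2 = 2$ and $\dim R/P_3 = 1$ then match the affine subspaces $D_1, D_2, D_3$ enumerated in Proposition \ref{prop3.1}. I expect the main obstacle to be the reverse inclusion in the second paragraph, where the hypothesis $c_1 \neq 0$ is used in an essential way; the remaining steps are routine ideal-theoretic bookkeeping.
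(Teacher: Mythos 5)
Your proposal is correct, and it uses the same decomposition into the primes $P_1=\ideal{x_{11},x_{12}}$, $P_2=\ideal{x_{11},x_{21}}$, $P_3=\ideal{x_{11}-c_1,x_{12},x_{21}}$ and the same primality and dimension checks as the paper; the difference lies entirely in how the reverse containment is established. The paper reduces to showing $P_1\cdot P_2\cdot P_3\subseteq J$ by listing all products of generators and checking they vanish in $R/J$, and then passes from the product to the intersection via the assertion that $\sqrt{P_1\cdot P_2\cdot P_3}=P_1\cdot P_2\cdot P_3$ because the $P_i$ are prime. That assertion is delicate: the radical of a product of distinct primes is their \emph{intersection}, not the product (here $x_{11}\in P_1\cap P_2$ but $x_{11}\notin P_1\cdot P_2$, which is generated in degree $2$), so what is really needed is $P_1\cap P_2\cap P_3\subseteq J$. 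Your argument proves exactly this stronger containment directly: the computation $P_1\cap P_2=\ideal{x_{11},\,x_{12}x_{21}}$ (via unique factorization after reducing mod $x_{11}$), the normal form $f=a\,x_{11}+b\,x_{12}x_{21}$, and the reduction mod $P_3$ using $c_1\neq 0$ to force $a\in P_3$, after which $f$ is a visible combination of the four generators. This is a cleaner and more airtight route to the identity $I=P_1\cap P_2\cap P_3$, and it yields the radicality of $I$ for free as an intersection of primes. The only caveat, shared equally with the paper, is that ``vanishing ideal'' here must be understood over the algebraic closure (or as $\sqrt{J}$), since over $\F_q$ itself the literal vanishing ideal of a finite point set also contains the field equations $x_{ij}^q-x_{ij}$; your algebraic identity $I=P_1\cap P_2\cap P_3$ is the correct and sufficient statement in either reading.
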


\begin{proof}
We will be working in the polynomial ring $R:=\F_q[x_{11},x_{12},x_{21},x_{22}]$. Suppose that $\p_1=\ideal{x_{11},x_{12}}$, $\p_2=\ideal{x_{11},x_{21}}$, and $\p_3=\ideal{x_{11}-c_1, x_{12},x_{21}}$.  Clearly, they are prime ideals in $R$ because $$R/\p_1\cong \F_q[x_{21},x_{22}],~ R/\p_2\cong \F_q[x_{12},x_{22}],~ R/\p_3\cong \F_q[x_{22}]$$ are integral domains. Moreover, $\p_1,\p_2$ and $\p_3$ are the vanishing ideals corresponding to $D_1,D_2$, and $D_3$ in Proposition \ref{prop3.1}, respectively. Thus each $D_i=V(\p_i)$ is irreducible and $$\D_{A_1}(\F_q)=D_1\cup D_2\cup D_3=V(\p_1)\cup V(\p_2)\cup V(\p_3).$$ Note that the Krull dimensions of $D_1$ and $D_2$ both are 2 and the Krull dimension of $D_3$ is 1. Hence,
the first statement follows.

To show the second statement, let $J$ denote the ideal generated by 
$\{x_{11}x_{12}, x_{11}x_{21}, x_{12}x_{21}, x_{11}^2 -c_1x_{11}\}$ in $R$.  By (\ref{eq3.2}), we see that $\D_{A_1}(\F_q)=V(J)$ and so the vanishing ideal $I(\D_{A_1}(\F_q))=\sqrt{J}$. Thus, we only need to show that
$\sqrt{J}\subseteq J$. By \cite[Lemma 1.1.5]{Gec03}, we see that
$$\sqrt{J}=I(\D_{A_1}(\F_q))=I(V(\p_1)\cup V(\p_2)\cup V(\p_3))=I(V(\p_1\cap\p_2\cap\p_3))=I(V(\p_1\cdot\p_2\cdot\p_3))=\sqrt{\p_1\cdot\p_2\cdot\p_3}.$$
As all $\p_i$ are prime ideals, it follows that $\sqrt{\p_1\cdot\p_2\cdot\p_3}=\p_1\cdot\p_2\cdot\p_3$.
Thus, it suffices to show that
\begin{equation}
\label{ }
\p_1\cdot\p_2\cdot\p_3\subseteq J.
\end{equation}
By \cite[Proposition 6, page 191]{CLO15}, which asserts that the product of two ideals in a polynomial ring can be generated by the product of generators of the ideals, we see that the ideal $\p_1\cdot\p_2$ can be generated by
$\{x_{11}^2, x_{11}x_{12}, x_{11}x_{21}, x_{12}x_{21}\}$ and thus $\p_1\cdot\p_2\cdot\p_3$ can be generated by
all products $f\cdot g$, where $f\in \{x_{11}^2, x_{11}x_{12}, x_{11}x_{21}, x_{12}x_{21}\}$ and $g\in \{x_{11}-c_1, x_{12},x_{21}\}$. A direct calculation shows that in the quotient ring $R/J$, the classes of all such products $f\cdot g$ will be zero. Thus 
$\p_1\cdot\p_2\cdot\p_3$ is contained in $J$. This proves that $J$ is the vanishing ideal
of $\D_{A_1}(\F_q)$ and furthermore, $J=\sqrt{J}=\p_1\cap\p_2\cap\p_3$.
\end{proof}

Combining Propositions \ref{prop3.1} and \ref{prop3.2} completes the proof of the first statement in Theorem \ref{mt1}.

\subsection{$c_1\neq c_2\in\F_q^\times$}
To give a proof to the second statement in Theorem \ref{mt1}, we define
$$\updelta:=c_1^2  -c_1c_2+ c_2^2.$$
The geometric structure of $\D_{A_1}(\F_q)$ differs depending on whether  $\updelta =0$ or $\updelta\neq0$.

\begin{prop}\label{prop3.3}
Let $c_1$ and $c_2$ be two distinct nonzero elements in $\F_q$. 
\begin{enumerate}
  \item Suppose that $\updelta=0$. Then $|\D_{A_1}(\F_q)|=2q+2$ and the variety $\D_{A_1}(\F_q)$ is a 1-dimensional affine variety consisting of three isolated points 
$$\left\{\begin{pmatrix}
     0 & 0   \\
    0  &  0
\end{pmatrix},\begin{pmatrix}
   c_1  &  0  \\
   0   & 0 
\end{pmatrix}, \begin{pmatrix}
   0   &  0  \\
   0   & c_2 
\end{pmatrix}\right\}$$
and two 1-dimensional irreducible components
$$E_1:=\left\{\begin{pmatrix}
     c_1 & 0   \\
      a& c_2 
\end{pmatrix}~\Big|~ a\in\F_q\right\}\textrm{ and } E_2:=\left\{\begin{pmatrix}
     c_1 & a   \\
      0& c_2 
\end{pmatrix}~\Big|~ a\in\F_q\right\}.$$
  \item Suppose that $\updelta\neq 0$. Then $|\D_{A_1}(\F_q)|=q+3$ and the variety $\D_{A_1}(\F_q)$ is also a 1-dimensional affine variety consisting of four isolated points 
$$\left\{\begin{pmatrix}
     0 & 0   \\
    0  &  0
\end{pmatrix},\begin{pmatrix}
   c_1  &  0  \\
   0   & 0 
\end{pmatrix},\begin{pmatrix}
   0   &  0  \\
   0   & c_2 
\end{pmatrix},\begin{pmatrix}
     c_1 & 0   \\
      0& c_2 
\end{pmatrix} \right\}$$
and one 1-dimensional irreducible component
$$E_3:=\left\{\begin{pmatrix}
     \frac{c_2^2}{c_2-c_1} & a   \\
      b& \frac{c_1^2}{c_1-c_2}
\end{pmatrix}~\Big|~ a,b\in\F_q^\times\textrm{ and }ab=-\frac{c_1c_2\cdot \updelta}{(c_1-c_2)^2}\right\}.$$
\end{enumerate}
\end{prop}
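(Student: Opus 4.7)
The plan is to exploit the factorizations visible in equation (\ref{eq3.1}) under the hypothesis $c_1, c_2 \in \F_q^\times$. The first two equations factor as $x_{12} \cdot L = 0$ and $x_{21} \cdot L = 0$, where $L := c_1 x_{11} + c_2 x_{22} - c_1 c_2$. I would split $\D_{A_1}(\F_q)$ into a \emph{diagonal branch} $\{x_{12} = x_{21} = 0\}$ and a \emph{generic branch} $\{(x_{12}, x_{21}) \neq (0,0)\}$, the latter forcing $L = 0$. On the diagonal branch, the last two equations of (\ref{eq3.1}) reduce to $x_{11}(x_{11} - c_1) = 0$ and $x_{22}(x_{22} - c_2) = 0$, producing exactly the four diagonal matrices with $x_{11} \in \{0, c_1\}$ and $x_{22} \in \{0, c_2\}$.

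On the generic branch, I would use equations (3) and (4) of (\ref{eq3.1}) to obtain two expressions $c_1 x_{12} x_{21} = c_2 x_{22}(c_2 - x_{22})$ and $c_2 x_{12} x_{21} = c_1 x_{11}(c_1 - x_{11})$ for the product $x_{12} x_{21}$. Eliminating $x_{12} x_{21}$ and substituting $x_{22} = c_1(c_2 - x_{11})/c_2$ from $L = 0$, the resulting identity in $x_{11}$ should collapse to $(c_2 - c_1)\bigl(c_2^2 - (c_2 - c_1)\,x_{11}\bigr) = 0$. Since $c_1 \neq c_2$, this pins down $x_{11} = c_2^2/(c_2 - c_1)$ and hence $x_{22} = c_1^2/(c_1 - c_2)$, and back-substitution yields the product condition $x_{12} x_{21} = -c_1 c_2 \updelta/(c_1 - c_2)^2$.

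The case analysis then splits on $\updelta$. If $\updelta = 0$, the identity $c_2^2 = c_1(c_2 - c_1)$ (equivalent to $\updelta = 0$) simplifies the unique values above to $x_{11} = c_1$, $x_{22} = c_2$, and the product condition becomes $x_{12} x_{21} = 0$. The generic branch is then exactly $E_1 \cup E_2$, meeting only at the matrix $\begin{pmatrix} c_1 & 0 \\ 0 & c_2 \end{pmatrix}$; this point is also among the four diagonal solutions, leaving three genuinely isolated diagonal points, so the total count is $3 + (2q - 1) = 2q + 2$. If $\updelta \neq 0$, a direct check shows $c_2^2/(c_2 - c_1) \notin \{0, c_1\}$, so the generic branch $E_3$ is disjoint from the four diagonal points and contributes $q - 1$ solutions via the hyperbolic condition $x_{12} x_{21} = k$ with $k = -c_1 c_2 \updelta/(c_1 - c_2)^2 \neq 0$, giving $4 + (q - 1) = q + 3$ solutions in total.

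For the dimension and irreducibility claims, $E_1$ and $E_2$ are parameterized affine lines in a single off-diagonal entry, hence irreducible of dimension $1$. For $E_3$, its coordinate ring is $\F_q[x_{12}, x_{21}]/\ideal{x_{12} x_{21} - k}$ with $k \neq 0$; the generator is irreducible in $\F_q[x_{12}, x_{21}]$ (any factorization into two linear forms would force $k = 0$), so the quotient is an integral domain and $E_3$ is a $1$-dimensional irreducible component. The isolated diagonal points are $0$-dimensional irreducible components by inspection. The main obstacle I anticipate is the algebraic collapse in the generic branch: the naive cross-multiplied identity is quadratic in $x_{11}$, and careful expansion is needed to verify that the $x_{11}^2$ terms cancel so the remaining linear equation correctly determines $x_{11}$; care is also needed when simplifying $(c_1 - u)$ to recognize the factor $-\updelta/(c_2 - c_1)$ that yields the clean form of the product condition.
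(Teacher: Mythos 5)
Your argument is correct, and the core algebra coincides with the paper's: both eliminate $x_{12}x_{21}$ by combining the third and fourth equations of (\ref{eq3.1}), substitute $c_2x_{22}=c_1(c_2-x_{11})$ from the common linear factor $L=c_1x_{11}+c_2x_{22}-c_1c_2$, observe that the quadratic terms in $x_{11}$ cancel, and read off $x_{11}=c_2^2/(c_2-c_1)$, $x_{22}=c_1^2/(c_1-c_2)$, and $x_{12}x_{21}=-c_1c_2\updelta/(c_1-c_2)^2$. Where you genuinely differ is the case decomposition. The paper splits on $x_{11}x_{12}=0$ versus $x_{11}x_{12}\neq 0$ and then on $x_{21}$, producing several subcases, one of which ($x_{11}=0$, $x_{12}\neq 0$) turns out to be empty, and recovering $E_1$ and $E_2$ in two separate places. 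You instead exploit the symmetric factorizations $x_{12}\cdot L=0$ and $x_{21}\cdot L=0$ to split into the diagonal branch $x_{12}=x_{21}=0$ (yielding the four diagonal solutions at once) and the branch forcing $L=0$ (handled uniformly, with the $\updelta$ dichotomy entering only at the end); this is cleaner and symmetric in $x_{12}\leftrightarrow x_{21}$. You also supply an explicit irreducibility argument for $E_3$ via the irreducibility of $x_{12}x_{21}-k$ for $k\neq 0$, which the paper asserts without proof. One small imprecision: $\dia(c_1,c_2)$ lies in your diagonal branch, so the $L=0$ branch is $(E_1\cup E_2)\setminus\{\dia(c_1,c_2)\}$ rather than all of $E_1\cup E_2$; your count $3+(2q-1)=2q+2$ is nevertheless correct because you use $|E_1\cup E_2|=2q-1$. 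As in the paper, the verification that the derived necessary conditions are also sufficient (i.e., that every point of $E_1,E_2,E_3$ satisfies all four equations) is routine and worth a sentence.
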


\begin{proof}
Our proof will be separated into two cases:  $x_{11}x_{12}=0$ and  $x_{11}x_{12}\neq0$.  

\textsc{Case 1}: Assume that $x_{11}x_{12}=0$. We proceed with the following three subcases: (1) $x_{11}=x_{12}=0$. In this subcase,  the equations in (\ref{eq3.1}) can be reduced to
\begin{eqnarray*}
-c_1x_{21} + x_{21}x_{22}= 0 &~~& x_{22}^2 -c_2x_{22} =0.
\end{eqnarray*}
By these two equations, we see that if $x_{22}=0$, then $x_{21}=0$ and we have $X=0.$ Moreover, if $x_{22}\neq 0$, then
$x_{22}=c_2$. Hence, $(c_2-c_1)x_{21}=0$,
which implies that $x_{21}=0$. This produces an isolated nonzero solution in $\D_{A_1}$:
$$X=\begin{pmatrix}
   0   &  0  \\
   0   & c_2 
\end{pmatrix}.$$
(2) $x_{11}=0$ and $x_{12}\neq 0$.  We may simplify (\ref{eq3.1}) as follows:
\begin{equation*} 
\begin{aligned}
 -c_1 + x_{22}&=0 & -c_1x_{21} + x_{21}x_{22}&= 0 \\
 c_1x_{12}x_{21} + c_2x_{22}^2 -c_2^2x_{22} &=0& x_{21}&=0.
\end{aligned}
\end{equation*}
Then $x_{22}=c_1$ and $c_2c_1^2=c_2^2c_1$. Since $c_1$ and $c_2$ both are nonzero, it follows that $c_1=c_2$, which contradicts with the original assumption that $c_1\neq c_2$. This means that
if $x_{11}=0$, then $x_{12}=0$. There are no new solutions obtained in this subcase. (3) $x_{11}\neq0$ and $x_{12}=0$.  We can reduce (\ref{eq3.1}) into the following equations:
\begin{eqnarray*}
c_1x_{11}x_{21}  -c_1c_2x_{21} + c_2x_{21}x_{22}=0 &~~
c_2x_{22}^2 -c_2^2x_{22} =0 &~~ c_1x_{11}^2 -c_1^2x_{11} =0.
\end{eqnarray*}
By the third and second equations, we see that $x_{11}=c_1$ and $x_{22}=c_2$ respectively. Substituting back to the first equation gives rise to:
\begin{equation}
\label{ }
c_1^2x_{21}  -c_1c_2x_{21} + c_2^2x_{21}=0.
\end{equation}
Recall that  $\updelta=c_1^2  -c_1c_2+ c_2^2.$
If $\updelta=0$, then $x_{21}$ is a free variable and 
$$X=\begin{pmatrix}
     c_1 & 0   \\
      a& c_2 
\end{pmatrix}\in E_1.$$
If $\updelta\neq0$, then $x_{21}=0$ and we obtain the following isolated solution: 
$$X=\begin{pmatrix}
     c_1 & 0   \\
      0& c_2 
\end{pmatrix}.$$

\textsc{Case 2}: Assume that $x_{11}x_{12}\neq 0$. In this case, the equations in (\ref{eq3.1}) can be reduced to
\begin{eqnarray}\label{eq3.5}
\begin{aligned}
c_1x_{11}-c_1c_2 + c_2x_{22}& =  0 \\
 c_1x_{12}x_{21} + c_2x_{22}^2 -c_2^2x_{22} &=0\\
   c_1x_{11}^2 -c_1^2x_{11}  + c_2x_{12}x_{21}&=0.
   \end{aligned}
\end{eqnarray}
(1) If $x_{21}=0$, then (\ref{eq3.5}) becomes
\begin{eqnarray*}
c_1x_{11}-c_1c_2 + c_2x_{22}= 0
&c_2x_{22}^2 -c_2^2x_{22} =0&
   c_1x_{11}^2 -c_1^2x_{11} =0.
\end{eqnarray*}
This implies that $x_{11}=c_1$ and $x_{22}=c_2$. Thus $\updelta=0$
and $x_{12}$ is a free variable. We obtain the second family $E_2$ of solutions: 
$$X=\begin{pmatrix}
     c_1 &  a  \\
     0 &  c_2
\end{pmatrix}.$$ 
(2) If $x_{21}\neq0$, then it follows from the first equation in (\ref{eq3.5}) that $c_2x_{22}=c_1c_2-c_1x_{11}.$
Together with the second and third equations in (\ref{eq3.5}), we may cancel the term involving $x_{12}x_{21}$ and derive
\begin{eqnarray*}
0&=&c_2^2x_{22}^2 -c_2^3x_{22}-   c_1^2x_{11}^2 +c_1^3x_{11} \\
&=&(c_1c_2-c_1x_{11})^2-c_2^2(c_1c_2-c_1x_{11})-   c_1^2x_{11}^2 +c_1^3x_{11}\\
&=&c_1^2c_2^2-2c_1^2c_2x_{11}+c_1^2x_{11}^2-c_1c_2^3+c_1c_2^2x_{11}-   c_1^2x_{11}^2 +c_1^3x_{11}\\
&=& (c_1^3-2c_1^2c_2+c_1c_2^2)x_{11}+(c_1^2c_2^2-c_1c_2^3).
\end{eqnarray*}
Hence,
$$x_{11}=\frac{c_1c_2^3-c_1^2c_2^2}{c_1^3-2c_1^2c_2+c_1c_2^2}=\frac{c_2^2(c_2-c_1)}{c_1^2-2c_1c_2+c_2^2}=\frac{c_2^2}{c_2-c_1}.$$
Moreover, 
$$x_{22}=c_2^{-1}(c_1c_2-c_1x_{11})=\frac{c_1^2}{c_1-c_2}$$
and it follows from the third equation in (\ref{eq3.5}) that
$$x_{12}x_{21}=-\frac{c_1c_2\cdot \updelta}{(c_1-c_2)^2}.$$
If $\updelta=0$, then there are no new solutions produced because of $x_{12}x_{21}=0$ and the solutions either belong to
$E_1$ or $E_2$. If $\updelta\neq 0$, then we obtain the third family of solutions:
$$X=\begin{pmatrix}
     \frac{c_2^2}{c_2-c_1} & a   \\
      b& \frac{c_1^2}{c_1-c_2}
\end{pmatrix}\in E_3$$ 
where $ab=-\frac{c_1c_2\cdot \updelta}{(c_1-c_2)^2}\neq 0$.

Summarizing these results, we conclude that when $\updelta=0$, the cardinality of $\D_{A_1}(\F_q)$ equals $2q+2$ and $\D_{A_1}(\F_q)$  consists of three isolated points and two 1-dimensional irreducible components $E_1$ and $E_2$; thus it is
a 1-dimensional affine variety. When $\updelta\neq0$, $|\D_{A_1}(\F_q)|=q+3$ and $\D_{A_1}(\F_q)$  consists of four isolated points and one 1-dimensional  irreducible component $E_3$.  
\end{proof}

Combining Propositions \ref{prop3.1}, \ref{prop3.2} and \ref{prop3.3}  completes the proof of Theorem \ref{mt1}.

\section{Geometry of $\D_{A_2}(\F_q)$} \label{sec4}
\setcounter{equation}{0}
\renewcommand{\theequation}
{4.\arabic{equation}}
\setcounter{theorem}{0}
\renewcommand{\thetheorem}
{4.\arabic{theorem}}

\noindent Throughout  this section, we always assume that $A_2=\begin{pmatrix}
   c   &  1  \\
    0  &  c
\end{pmatrix}$
for some $c\in\F_q$. Consider a generic solution $X=\begin{pmatrix}
   x_{11}   &  x_{12}  \\
    x_{21}  &  x_{22}
\end{pmatrix}\in\D_{A_2}(\F_q)$. It follows from (\ref{YBME}) that
$$XA_2X-A_2XA_2=\begin{pmatrix}
   x_{11}   &  x_{12}  \\
    x_{21}  &  x_{22}
\end{pmatrix}\begin{pmatrix}
   c   &  1  \\
    0  &  c
\end{pmatrix}\begin{pmatrix}
   x_{11}   &  x_{12}  \\
    x_{21}  &  x_{22}
\end{pmatrix}-\begin{pmatrix}
   c   &  1  \\
    0  &  c
\end{pmatrix}\begin{pmatrix}
   x_{11}   &  x_{12}  \\
    x_{21}  &  x_{22}
\end{pmatrix}\begin{pmatrix}
   c   &  1  \\
    0  &  c
\end{pmatrix}=0.$$
A straightforward calculation implies  that
\begin{equation} \label{eq4.1}
\begin{aligned}
-c^2x_{12} + cx_{11}x_{12} -cx_{11} + cx_{12}x_{22} -cx_{22} + x_{11}x_{22} -x_{21}&=0\\
    -c^2x_{11} + cx_{11}^2 + cx_{12}x_{21} -cx_{21} + x_{11}x_{21}&=0\\
    -c^2x_{21} + cx_{11}x_{21} + cx_{21}x_{22} + x_{21}^2&=0\\
    -c^2x_{22} + cx_{12}x_{21} -cx_{21} + cx_{22}^2 + x_{21}x_{22}&=0.
 \end{aligned}
\end{equation}
This section is devoted to giving a proof to Theorem \ref{mt2} and our argument will be divided into two cases: $c=0$ and $c\neq 0$.

\subsection{$c=0$}  
The following result provides a proof to the first statement in Theorem \ref{mt2}.

\begin{prop}\label{prop4.1}
If $c=0$, then $\D_{A_2}(\F_q)$ is of cardinality $2q^2-q$ and is a 2-dimensional affine variety consisting of 
two 2-dimensional irreducible components. 
\end{prop}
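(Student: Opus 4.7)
The plan is to substitute $c=0$ directly into the four defining equations (\ref{eq4.1}) and observe that they collapse to a very simple system, after which both the cardinality count and the irreducible decomposition become essentially immediate.

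First I would set $c=0$ in (\ref{eq4.1}). Three of the four equations reduce to $x_{11}x_{21}=0$, $x_{21}^2=0$, and $x_{21}x_{22}=0$, while the first equation becomes $x_{11}x_{22}-x_{21}=0$. Since we are looking for $\F_q$-valued solutions, $x_{21}^2=0$ forces $x_{21}=0$, at which point the remaining conditions collapse to the single relation $x_{11}x_{22}=0$. Thus
\[
\D_{A_2}(\F_q)=\left\{\begin{pmatrix} x_{11}&x_{12}\\ 0&x_{22}\end{pmatrix}\in M_2(\F_q)\;\middle|\; x_{11}x_{22}=0\right\}.
\]

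Next I would decompose this solution set as $F_1\cup F_2$, where
\[
F_1:=\left\{\begin{pmatrix}0 & a\\ 0& b\end{pmatrix}\;\middle|\; a,b\in\F_q\right\},\qquad
F_2:=\left\{\begin{pmatrix}a & b\\ 0& 0\end{pmatrix}\;\middle|\; a,b\in\F_q\right\}.
\]
Each of $F_1,F_2$ has cardinality $q^2$, and their intersection consists of upper triangular matrices with both diagonal entries zero, giving $q$ elements. Inclusion–exclusion then yields $|\D_{A_2}(\F_q)|=2q^2-q$, which is the claimed count.

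For the geometric statement I would identify the vanishing ideals of $F_1$ and $F_2$ as the prime ideals $\p_1:=\ideal{x_{11},x_{21}}$ and $\p_2:=\ideal{x_{21},x_{22}}$ in $R$, respectively. These are prime because $R/\p_1\cong \F_q[x_{12},x_{22}]$ and $R/\p_2\cong \F_q[x_{11},x_{12}]$ are integral domains, and both quotients have Krull dimension $2$. Hence $F_1=V(\p_1)$ and $F_2=V(\p_2)$ are irreducible $2$-dimensional affine components whose union is exactly $\D_{A_2}(\F_q)$, so $\D_{A_2}(\F_q)$ is a $2$-dimensional affine variety with precisely two $2$-dimensional irreducible components (neither contained in the other).

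I do not anticipate any real obstacle here: once $x_{21}^2=0$ is used to eliminate $x_{21}$, both the counting and the component analysis are straightforward. The only mild subtlety is checking that the four polynomials in (\ref{eq4.1}) with $c=0$ indeed have radical ideal $\p_1\cap\p_2=\ideal{x_{21},x_{11}x_{22}}$, which follows by the same argument as in the proof of Proposition \ref{prop3.2}, using $\sqrt{\ideal{x_{21}^2}}=\ideal{x_{21}}$ together with the product-of-ideals description of \cite[Proposition 6, page 191]{CLO15}.
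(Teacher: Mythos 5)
Your proposal is correct and follows essentially the same route as the paper: substitute $c=0$, use $x_{21}^2=0$ to force $x_{21}=0$, split on $x_{11}x_{22}=0$ into the same two families with prime vanishing ideals $\ideal{x_{11},x_{21}}$ and $\ideal{x_{21},x_{22}}$, and count by inclusion--exclusion. The only addition is your explicit remark that the radical of the defining ideal is $\ideal{x_{21},x_{11}x_{22}}=\p_1\cap\p_2$, which is a harmless (and correct) refinement of what the paper records.
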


\begin{proof}
As $c=0$, the equations in (\ref{eq4.1}) become to
\begin{equation}
\label{eq4.2}
x_{11}x_{22} -x_{21}=0,~~\quad x_{11}x_{21}=0,~~\quad x_{21}^2=0,~~\quad x_{21}x_{22}=0.
\end{equation}
The fact that $x_{21}^2=0$ implies that $x_{21}=0$. Thus the second and the fourth equations in (\ref{eq4.2}) disappear.  
The first equation shows that either $x_{11}=0$ or $x_{22}=0$. If $x_{11}=0$, then $x_{22}$ and $x_{12}$ are two free variables and we obtain the first family of solutions in $\D_{A_2}(\F_q)$:
$$X=\begin{pmatrix}
      0& a   \\
     0 &  b
\end{pmatrix}.$$
If $x_{11}\neq 0$, then $x_{22}$ must be zero. Thus $x_{11}$ and $x_{12}$ are free variables and this gives us the second family of solutions in $\D_{A_2}(\F_q)$:
$$X=\begin{pmatrix}
      a& b \\
     0 &  0
\end{pmatrix}.$$ Here, $a,b$ are any elements in $\F_q$.  Clearly, the vanishing ideal of the first family is
$\p_1:=\ideal{x_{11}, x_{21}}$ and the vanishing ideal of the second family is $\p_2:=\ideal{x_{21}, x_{22}}$. Note that the two ideals both are prime, thus 
$$V(\p_1)=\left\{\begin{pmatrix}
      0& a   \\
     0 &  b
\end{pmatrix}~\Big|~ a,b\in \F_q\right\}\textrm{ and } V(\p_2)=\left\{\begin{pmatrix}
      a& b   \\
     0 &  0
\end{pmatrix}~\Big|~ a,b\in \F_q\right\}$$
both are 2-dimensional irreducible components of $\D_{A_2}(\F_q)$. Moreover, 
$$|\D_{A_2}(\F_q)|=|V(\p_1)\cup V(\p_2)|=|V(\p_1)|+|V(\p_2)|-|V(\p_1)\cap V(\p_2)|=q^2+q^2-q=2q^2-q$$
because $|V(\p_1)|=|V(\p_2)|=q^2$ and $|V(\p_1)\cap V(\p_2)|=q.$
\end{proof}

\subsection{$c\neq0$} 
In this subsection, we assume that $c\neq 0$. The following result proves the second statement in Theorem \ref{mt2}.

\begin{prop}\label{prop4.2}
If $c\neq 0$, then $\D_{A_2}(\F_q)$  is a 1-dimensional affine variety consisting of two isolated points and 
one 1-dimensional irreducible component. In particular, $|\D_{A_2}(\F_q)|=q+2$.
\end{prop}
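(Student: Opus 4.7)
The plan is to solve the polynomial system (\ref{eq4.1}) directly by a case split on whether $x_{21}$ vanishes. First I would handle $x_{21}=0$. The third equation of (\ref{eq4.1}) then vanishes automatically, while the second and fourth equations reduce respectively to $cx_{11}(x_{11}-c)=0$ and $cx_{22}(x_{22}-c)=0$. Since $c\neq 0$, each of $x_{11},x_{22}$ lies in $\{0,c\}$. Substituting the four combinations into the first equation, the ``mixed'' cases $(x_{11},x_{22})=(0,c)$ and $(c,0)$ force $-c^2=0$, contradicting $c\neq 0$, while $(0,0)$ forces $x_{12}=0$ and $(c,c)$ forces $x_{12}=1$. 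This yields exactly the two isolated solutions $X=0$ and $X=A_2$.

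Next, in the case $x_{21}\neq 0$, I would factor the third equation of (\ref{eq4.1}) as $x_{21}\bigl(x_{21}-c^2+cx_{11}+cx_{22}\bigr)=0$, forcing $x_{21}=c(c-s)$ with $s:=x_{11}+x_{22}$. Substituting this into the first equation of (\ref{eq4.1}) and simplifying leads to the clean ``determinant relation''
\[
x_{11}x_{22}-x_{12}x_{21}=c^2.
\]
Using this relation to replace $cx_{12}\cdot x_{21}$ in the second equation of (\ref{eq4.1}), the quadratic terms in $x_{11}$ cancel and the equation collapses to the single linear constraint $c^2(s-2c)=0$, i.e.\ $s=2c$. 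By the symmetric calculation, the fourth equation of (\ref{eq4.1}) yields the same condition. Therefore $x_{22}=2c-x_{11}$, $x_{21}=-c^2$, and $x_{12}=(x_{11}-c)^2/c^2$, so letting $t:=x_{11}$ run over $\F_q$ produces the one-parameter family
\[
E:=\left\{\begin{pmatrix} t & (t-c)^2/c^2 \\ -c^2 & 2c-t \end{pmatrix} \vertl t\in\F_q\right\},
\]
of cardinality $q$. Since every matrix in $E$ has $x_{21}=-c^2\neq 0$, the family $E$ is disjoint from the two isolated points found above.

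To verify that $E$ is a $1$-dimensional irreducible component of $\D_{A_2}(\F_q)$, I would consider the ideal
\[
I:=\ideal{x_{11}+x_{22}-2c,\ x_{21}+c^2,\ c^2x_{12}-(x_{11}-c)^2}\subseteq R.
\]
Eliminating $x_{22}$, $x_{21}$, and $x_{12}$ in turn yields an isomorphism $R/I\cong \F_q[x_{11}]$, which is an integral domain of Krull dimension $1$; hence $I$ is prime and $V(I)=E$ is an irreducible $1$-dimensional affine variety. Combined with the two isolated solutions from the previous case, this gives $\D_{A_2}(\F_q)=\{0,A_2\}\sqcup E$ and $|\D_{A_2}(\F_q)|=q+2$, as claimed.

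The main technical obstacle lies in the $x_{21}\neq 0$ case: one must recognize that once $x_{21}$ and $x_{12}$ have been expressed via equation (\ref{eq4.1})(3) and the determinant relation extracted from equation (\ref{eq4.1})(1), the remaining second and fourth equations of (\ref{eq4.1}) are \emph{not} independent but both collapse to the same linear condition $s=2c$. The substitution of $x_{12}$ is valid without loss precisely because $x_{21}\neq 0$ in this regime, so no spurious solutions are introduced and none are missed in the simplification.
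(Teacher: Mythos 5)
Your proposal is correct and follows essentially the same route as the paper's proof: a case split on $x_{21}=0$ versus $x_{21}\neq 0$, extraction of the determinant relation $x_{11}x_{22}-x_{12}x_{21}=c^2$ from the first and third equations, and an irreducibility check via $R/I\cong\F_q[x_{11}]$. Your handling of the $x_{21}\neq 0$ case (introducing $s=x_{11}+x_{22}$ and collapsing the second and fourth equations to $c^2(s-2c)=0$) is a slightly cleaner organization of the same computation, and your family $E$ coincides with the paper's after reparametrization.
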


\begin{proof}
Our argument will be separated into two subcases: $x_{21}=0$ and $x_{21}\neq0$. First of all, we assume that $x_{21}=0$. This assumption makes (\ref{eq4.1}) become
\begin{equation} \label{eq4.3}
\begin{aligned}
c^2x_{12} - cx_{11}x_{12} +cx_{11} - cx_{12}x_{22} + cx_{22} - x_{11}x_{22}&=0\\
    c^2x_{11} - cx_{11}^2 =0,~~\quad c^2x_{22}  - cx_{22}^2 &=0.
 \end{aligned}
\end{equation}
Note that $c\neq 0$, the last two equations in (\ref{eq4.3}) imply that $x_{11},x_{22}\in\{0,c\}$. Thus, we have four possibilities for the values of $x_{11}$ and $x_{22}$. If $(x_{11},x_{22})=(0,0)$, then substituting it to the first equation in  
(\ref{eq4.3}) gives $x_{12}=0$ and so we obtain the zero solution in $\D_{A_2}(\F_q)$.  If $(x_{11},x_{22})=(0,c)$ or $(x_{11},x_{22})=(c,0)$, then it follows from the first equation in (\ref{eq4.3}) that $c=0$, which is a contradiction. If $(x_{11},x_{22})=(c,c)$, then $x_{12}=1$ by (\ref{eq4.3}). Thus, the following matrix
$$X=\begin{pmatrix}
      c& 1   \\
     0 &  c
\end{pmatrix}=A_2$$
was revealed as a point in $\D_{A_2}(\F_q)$.

Now, let us assume that $x_{21}\neq0$. Dividing the third equation in (\ref{eq4.1}) by $x_{21}$, we see that
\begin{eqnarray} 
-c^2x_{12} + cx_{11}x_{12} -cx_{11} + cx_{12}x_{22} -cx_{22} + x_{11}x_{22} -x_{21}&=&0\label{eq4.4}\\
    -c^2x_{11} + cx_{11}^2 + cx_{12}x_{21} -cx_{21} + x_{11}x_{21}&=&0\label{eq4.5}\\
    -c^2+ cx_{11} + cx_{22} + x_{21}&=&0\label{eq4.6}\\
    -c^2x_{22} + cx_{12}x_{21} -cx_{21} + cx_{22}^2 + x_{21}x_{22}&=&0.\label{eq4.7}
\end{eqnarray}
Adding (\ref{eq4.6}) on (\ref{eq4.4}) obtains 
\begin{equation}
\label{eq4.8}
-c^2x_{12} + cx_{11}x_{12}  + cx_{12}x_{22}  + x_{11}x_{22}   -c^2  =0.
\end{equation}
It follows from (\ref{eq4.6}) that  $cx_{11} + cx_{22}=c^2-x_{21}$. Substituting this fact to (\ref{eq4.8}) gives
\begin{eqnarray*}
c^2&=&x_{11}x_{22}-c^2x_{12} + (cx_{11} + cx_{22})x_{12} \\
&=&x_{11}x_{22}-c^2x_{12} +(c^2-x_{21})x_{12}\\
&=&x_{11}x_{22}-x_{12}x_{21}.
\end{eqnarray*}
By (\ref{eq4.7}) and (\ref{eq4.5}) , we observe that $x_{12}x_{21}  =  cx_{22} +x_{21} - x_{22}^2 - c^{-1}x_{21}x_{22}$ and 
$x_{11}=c- c^{-1}x_{21} - x_{22}$. Thus
$c^2= x_{11}x_{22}-x_{12}x_{21}=(cx_{22}- c^{-1}x_{21}x_{22} - x_{22}^2)- (cx_{22} +x_{21} - x_{22}^2 - c^{-1}x_{21}x_{22})=-x_{21}$. This implies that
\begin{equation}
\label{ }
x_{21}=-c^2.
\end{equation}
Moreover, $x_{11}=c- c^{-1}x_{21} - x_{22}=c- c^{-1}(-c^2) - x_{22}=2c-x_{22}$. We substitute $x_{21}$ and $x_{11}$ back to (\ref{eq4.7}) and obtain 
\begin{equation}
\label{ }
 - c^2x_{12} +c^2 + x_{22}^2 -2cx_{22}=0,
\end{equation}
which means that $x_{12}=c^{-2}x_{22}^2 -2c^{-1}x_{22}+1.$ Hence, we deduce the following family of solutions in $\D_{A_2}(\F_q)$:
$$E:=\left\{\begin{pmatrix}
   2c-a   &  c^{-2}a^2 -2c^{-1}a+1  \\
 -c^2     &  a
\end{pmatrix} ~\Big|~ a\in\F_q\right\}.$$
Therefore, 
$$\D_{A_2}(\F_q)=\left\{\begin{pmatrix}
   0   &  0  \\
    0  & 0 
\end{pmatrix},\begin{pmatrix}
      c& 1   \\
     0 &  c
\end{pmatrix}\right\}\cup E$$
and $|\D_{A_2}(\F_q)|=q+2$. 

Note that the Krull dimension of $E$ is 1. To complete the proof, we need to prove that $E$ is an irreducible component of 
$\D_{A_2}(\F_q)$, i.e., the vanishing ideal of $E$ is a prime ideal in $R$. Let $\p$ be the ideal generated by
$\{x_{21}+c^2,x_{11}+x_{22}-2c, x_{12}-c^{-2}x_{22}^2 +2c^{-1}x_{22}-1 \}$ in $R$. Then the radical of $\p$ equals the vanishing ideal of $E$. Thus, it suffices to show that $\p$ is prime, which is equivalent to verifying that 
$R/\p$ is an integral domain. In fact, 
$$R/\p=\F_q[x_{11},x_{12},x_{21},x_{22}]/\ideal{x_{21}+c^2,x_{11}+x_{22}-2c, x_{12}-c^{-2}x_{22}^2 +2c^{-1}x_{22}-1}\cong \F_q[x_{22}]$$
is an integral domain. Hence, $\p$ is a prime ideal and $E$ is irreducible. 
\end{proof}

\begin{rem}{\rm 
Combining Propositions \ref{prop4.1} and \ref{prop4.2}  completes the proof of Theorem \ref{mt2}.
\hbo}\end{rem}

\section{Geometry of $\D_{A_3}(\F_q)$} \label{sec5}
\setcounter{equation}{0}
\renewcommand{\theequation}
{5.\arabic{equation}}
\setcounter{theorem}{0}
\renewcommand{\thetheorem}
{5.\arabic{theorem}}

\noindent In this section, we assume that $\cha(\F_q)\neq 2$ and study the geometry of the affine variety $\D_{A_3}(\F_q)$, where $A_3=\begin{pmatrix}
   0   &  -b  \\
    1  &  a
\end{pmatrix}$
and $a,b\in\F_q$ such that $x^2-ax+b$ is irreducible in $\F_q[x]$. We write 
$$\Delta:=a^2-4b$$
for the discriminant of a quadratic function $x^2-ax+b$ over any field $\K$. 

The following lemma might be well-known  but
we include a proof to this result for the readers' convenience, because we were unable to find a suitable reference.

\begin{lem}\label{lem5.1}
If $\cha(\K)\neq 2$, then $x^2-ax+b$ irreducible if and only if $\Delta$ is a non-square in $\K$.
\end{lem}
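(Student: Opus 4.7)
The plan is to use the completing-the-square trick, which is available since $\cha(\K)\neq 2$ means $2$ is a unit in $\K$. Over any field, a quadratic polynomial is reducible if and only if it has a root in the field (since if it factors nontrivially, both factors must be linear, and conversely a root produces a linear factor). So the statement reduces to showing that $x^2-ax+b$ has a root in $\K$ iff $\Delta$ is a square in $\K$.

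First I would rewrite
\[
x^2-ax+b \;=\; \Bigl(x-\tfrac{a}{2}\Bigr)^{2} - \frac{a^{2}-4b}{4} \;=\; \Bigl(x-\tfrac{a}{2}\Bigr)^{2} - \frac{\Delta}{4},
\]
which is valid because $2\in\K^{\times}$. Setting $y=x-a/2$, we see that $x\in\K$ is a root of $x^2-ax+b$ if and only if $y\in\K$ satisfies $y^{2}=\Delta/4$. Thus $x^2-ax+b$ has a root in $\K$ iff $\Delta/4$ is a square in $\K$. Since $4=2^{2}$ is a nonzero square and the set of squares in $\K^{\times}\cup\{0\}$ is closed under multiplication by nonzero squares, $\Delta/4$ is a square iff $\Delta$ is a square. (The case $\Delta=0$ is trivially a square, and corresponds to the reducible polynomial $(x-a/2)^{2}$.)

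Combining these equivalences: $x^2-ax+b$ is reducible over $\K$ iff it has a root in $\K$ iff $\Delta$ is a square in $\K$, which is the contrapositive of the claim. No step presents a real obstacle; the only thing to watch is to make explicit that $4$ is a nonzero square so that multiplying or dividing by it preserves the property of being a square.
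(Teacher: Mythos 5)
Your proof is correct and rests on the same underlying idea as the paper's (the quadratic formula, available because $2$ is invertible); you merely package both implications into one chain of equivalences via completing the square, whereas the paper argues the two directions separately by exhibiting the explicit factorization $\bigl(x-\tfrac{a+c}{2}\bigr)\bigl(x-\tfrac{a-c}{2}\bigr)$ when $\Delta=c^2$ and computing $\Delta=(r-s)^2$ from a factorization $(x-r)(x-s)$. Your handling of the edge case $\Delta=0$ and of the invertibility of $4$ is exactly the care the argument needs, so nothing is missing.
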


\begin{proof}
$(\RA)$ Assume by the way of contradiction that there exists an element $c\in\K$ such that $\Delta=c^2$. Then $\frac{a+c}{2}$ and $\frac{a-c}{2}$ both belong to $\K$. Moreover, 
\begin{eqnarray*}
\left(x-\frac{a+c}{2}\right)\left(x-\frac{a-c}{2}\right)&=&x^2-\left(\frac{a+c}{2}+\frac{a-c}{2}\right)x+\frac{a+c}{2}\cdot\frac{a-c}{2} \\
&=&x^2-ax+\frac{a^2-\Delta}{4}\\
&=&x^2-ax+b
\end{eqnarray*}
which means that $x^2-ax+b$ is reducible over $\K$. This contradiction shows that if  $x^2-ax+b$ irreducible, then
$\Delta$ is a non-square in $\K$. 

$(\LA)$ Conversely, we assume that $x^2-ax+b$ is reducible, i.e., there are two elements
$r,s\in\K$ such that $x^2-ax+b=(x-r)(x-s)=x^2-(r+s)x+rs$. Thus
$a=r+s$ and $b=rs$. This implies that $\Delta=a^2-4b=(r+s)^2-4rs=(r-s)^2$ is a square element in $\K$. This contradiction shows that $x^2-ax+b$ is irreducible over $\K$.
\end{proof}

\begin{rem}{\rm 
By Lemma \ref{lem5.1}, we see that $b\neq 0$ because if $b=0$, then $\Delta=a^2$ is a square in $\F_q$. 
\hbo}\end{rem}

The main purpose of this section is to prove Theorem \ref{mt3}, in which we assume that 
\begin{equation}
\label{ }
\Delta= -b.
\end{equation}
Thus $a^2=3b$ and $a,b$ are not zero.

We begin with a generic solution $X=\begin{pmatrix}
   x_{11}   &  x_{12}  \\
    x_{21}  &  x_{22}
\end{pmatrix}\in\D_{A_3}(\F_q)$. Thus $XA_3X-A_3XA_3=0$
gives rise to the following equations:
\begin{equation} \label{eq5.2}
\begin{aligned}
ax_{12}x_{21} -bx_{11}x_{21} + bx_{22} + x_{11}x_{12}&=0\\
    ax_{21}x_{22} -ax_{22} -bx_{21}^2 + x_{11}x_{22} -x_{12} &=0\\
    abx_{22} + ax_{12}x_{22} -b^2x_{21} -bx_{11}x_{22} + x_{12}^2 &=0\\
    a^2x_{22} - abx_{21} +ax_{12} - ax_{22}^2 - bx_{11} +bx_{21}x_{22} - x_{12}x_{22}&=0.
 \end{aligned}
\end{equation}
Let us write $f_1,\dots,f_4$ for the quadratic functions corresponding to the  equations in (\ref{eq5.2}) and define 
$$J:=\ideal{f_1,\dots,f_4}.$$
We also define $\p_1$ to be the ideal generated by
$$\left\{g_1:=x_{11} + x_{22} - a, g_2:=x_{12} -bx_{21} +ax_{22} - b, g_3:=x_{21}^2 -\frac{a}{b}x_{21}x_{22} + x_{21} +\frac{1}{b}x_{22}^2 -\frac{a}{b}x_{22} + 1\right\}$$
and $\p_2$ to be the ideal generated by
$$\left\{h_1:=x_{11} + 2x_{22} -a, h_2:=x_{12} + ax_{22} -b, h_3:=x_{21} -\frac{a}{b}x_{22} + 1,
h_4:=x_{22}^2 -ax_{22} + b \right\}.$$
Moreover, we define $\p_3:=\ideal{x_{11},x_{12},x_{21},x_{22}}$.

\begin{lem}\label{lem5.3}
$\p_1,\p_2,\p_3$ are prime ideals.
\end{lem}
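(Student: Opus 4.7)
The plan is to prove each ideal is prime by exhibiting a concrete isomorphism of its quotient ring with a well-known integral domain. For $\p_3 = \langle x_{11}, x_{12}, x_{21}, x_{22}\rangle$, the quotient $R/\p_3 \cong \F_q$ is a field, so $\p_3$ is maximal and in particular prime; this is immediate.

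For $\p_2$, I would observe that the three linear generators $h_1, h_2, h_3$ are monic (as polynomials in $x_{11}, x_{12}, x_{21}$ respectively, using $b \neq 0$ by the remark following Lemma \ref{lem5.1}), so they successively eliminate $x_{11}, x_{12}, x_{21}$ in terms of $x_{22}$. This yields
\[
R/\p_2 \;\cong\; \F_q[x_{22}]/\langle h_4\rangle \;=\; \F_q[x_{22}]/\langle x_{22}^2 - ax_{22} + b\rangle.
\]
Since $x^2 - ax + b$ is irreducible over $\F_q$ by hypothesis, this quotient is a field (isomorphic to $\F_{q^2}$), hence $\p_2$ is prime (in fact maximal).

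For $\p_1$, the linear generators $g_1$ and $g_2$ eliminate $x_{11}$ and $x_{12}$, so
\[
R/\p_1 \;\cong\; \F_q[x_{21}, x_{22}]/\langle g_3\rangle,
\]
and the task reduces to proving that $g_3$ is irreducible in $\F_q[x_{21}, x_{22}]$. Viewed as a polynomial in $x_{21}$ with coefficients in $\F_q[x_{22}]$, $g_3$ is monic of degree two, hence primitive; by Gauss's lemma, irreducibility in $\F_q[x_{22}][x_{21}]$ is equivalent to irreducibility in $\F_q(x_{22})[x_{21}]$. Since $\cha(\F_q) \neq 2$, the quadratic formula applies, and I would compute the discriminant of $g_3$ with respect to $x_{21}$: after expanding $\left(-\tfrac{a}{b}x_{22} + 1\right)^2 - 4\left(\tfrac{1}{b}x_{22}^2 - \tfrac{a}{b}x_{22} + 1\right)$ one finds
\[
\mathrm{disc}_{x_{21}}(g_3) \;=\; \frac{\Delta}{b^2}\, x_{22}^2 + \frac{2a}{b}\, x_{22} - 3.
\]
Substituting the hypothesis $\Delta = -b$ (equivalently $a^2 = 3b$) this discriminant factors as $-\tfrac{1}{b}(x_{22} - a)^2$.

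The main obstacle, and the decisive step, is to conclude non-squareness of this discriminant in $\F_q(x_{22})$. Because $(x_{22} - a)^2$ is already a square in $\F_q(x_{22})$, the discriminant is a square in $\F_q(x_{22})$ if and only if $-1/b$ is a square in $\F_q$, if and only if $-b = \Delta$ is a square in $\F_q$. But Lemma \ref{lem5.1} together with the irreducibility of $x^2 - ax + b$ forces $\Delta$ to be a non-square, so the discriminant is a non-square in $\F_q(x_{22})$. Hence $g_3$ does not split in $\F_q(x_{22})[x_{21}]$, so it is irreducible there and, by the Gauss's lemma argument above, irreducible in $\F_q[x_{21}, x_{22}]$. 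Consequently $R/\p_1$ is an integral domain (a degree-$2$ extension of $\F_q[x_{22}]$), and $\p_1$ is prime.
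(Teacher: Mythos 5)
Your proposal is correct and follows essentially the same route as the paper: $\p_3$ and $\p_2$ are handled by the identical quotient-ring observations, and for $\p_1$ you reduce to the irreducibility of $g_3$ over $\F_q(x_{22})$, compute the same discriminant $-\tfrac{1}{b}(x_{22}-a)^2 = (x_{22}-a)^2/\Delta$, and conclude non-squareness from $\Delta$ being a non-square. The only cosmetic difference is that you invoke Gauss's lemma where the paper embeds $\F_q[x_{21},x_{22}]/\ideal{g_3}$ into $\F_q(x_{22})[x_{21}]/\ideal{g_3}$; both are standard justifications of the same reduction.
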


\begin{proof}
Clearly, $\p_3$ is prime because $R/\p_3\cong \F_q$ is a field and so an integral domain. To see that $\p_2$ is prime, we note that $R/\p_2\cong \F_q[x_{22}]/\ideal{h_4}$  and together with the fact that $h_4=x_{22}^2 -ax_{22} + b$ is irreducible, it follows that $\F_q[x_{22}]/\ideal{h_4}$ is an integral domain. Thus, $\p_2$ is also a prime ideal. To show that $\p_1$ is prime, we observe that $R/\p_1\cong \F_{q}[x_{21},x_{22}]/\ideal{g_3}$ and the latter can be embedded into the quotient ring
$\F_q(x_{22})[x_{21}]/\ideal{g_3}$. Thus, it suffices to show that $\F_q(x_{22})[x_{21}]/\ideal{g_3}$ is an integral domain. 
Now we may review 
$$g_3=x_{21}^2 -\left(\frac{a}{b}x_{22} -1\right)x_{21} +\left(\frac{1}{b}x_{22}^2 -\frac{a}{b}x_{22} + 1\right)$$
as a polynomial over the rational function field $\F_q(x_{22})$ in the variable $x_{21}$. Note that $\Delta=-b\neq 0$ and  $a^2=3b$. The discriminant of $g_3$ is
\begin{eqnarray*}
& & \left(\frac{a}{b}x_{22} -1\right)^2-4\left(\frac{1}{b}x_{22}^2 -\frac{a}{b}x_{22} + 1\right) \\
 & = & \frac{a^2}{b^2}x_{22}^2-  \frac{2a}{b}x_{22}+1-\frac{4}{b}x_{22}^2 +\frac{4a}{b}x_{22} -4\\
 &=&\frac{a^2-4b}{b^2}x_{22}^2+\frac{2a}{b}x_{22}-3\\
  &=&\frac{-b}{b^2}x_{22}^2+\frac{2a}{b}x_{22}-\frac{3b}{b}\\
  &=&-\frac{1}{b}(x_{22}^2-2ax_{22}+a^2)=\frac{(x_{22}-a)^2}{\Delta}.
\end{eqnarray*}
Since $\Delta$ is a non-square element, it follows that the discriminant of $g_3$ is also a non-square element. 
By Lemma \ref{lem5.1}, we see that $g_3$ is irreducible over $\F_q(x_{22})$. Hence, $\F_q(x_{22})[x_{21}]/\ideal{g_3}$ is an integral domain and so is  $R/\p_1$. This proves that $\p_1$ is also a prime ideal.
\end{proof}

\begin{lem}\label{lem5.4}
$\sqrt{J}=\p_1\cap\p_2\cap\p_3$.
\end{lem}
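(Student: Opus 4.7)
The plan is to establish the two inclusions $\sqrt{J} \subseteq \p_1 \cap \p_2 \cap \p_3$ and $\p_1 \cap \p_2 \cap \p_3 \subseteq \sqrt{J}$ separately, mirroring the structure of the proof of Proposition \ref{prop3.2}. In both directions Lemma \ref{lem5.3} is essential, as it guarantees that each $\p_i$ is prime and therefore radical.

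For the first inclusion, since each $\p_i$ is radical, it suffices to verify $J \subseteq \p_i$ for $i=1,2,3$. The containment $J \subseteq \p_3 = \ideal{x_{11},x_{12},x_{21},x_{22}}$ is immediate, because each generator $f_j$ in (\ref{eq5.2}) has vanishing constant term. For $J \subseteq \p_2$, I would substitute the relations $x_{11} \equiv a - 2x_{22}$, $x_{12} \equiv b - ax_{22}$, $x_{21} \equiv \tfrac{a}{b}x_{22} - 1$, and $x_{22}^2 \equiv ax_{22} - b$ (coming from $h_1, h_2, h_3, h_4$) into each $f_j$ and check that the reduced polynomial in $x_{22}$ vanishes. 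For $J \subseteq \p_1$, analogous substitutions using $g_1, g_2$ followed by reduction modulo the quadratic relation $g_3$ should likewise annihilate the $f_j$. Throughout, the hypothesis $\Delta = -b$ (equivalently $a^2 = 3b$) is invoked repeatedly to collapse terms.

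For the reverse inclusion, I would use the standard identity $\sqrt{\p_1 \cdot \p_2 \cdot \p_3} = \p_1 \cap \p_2 \cap \p_3$, valid because each $\p_i$ is prime. It then suffices to prove $\p_1 \cdot \p_2 \cdot \p_3 \subseteq J$. By \cite[Proposition 6, page 191]{CLO15}, the product ideal is generated by the $48$ triple products $g_i \cdot h_j \cdot x_{kl}$, where $g_i, h_j, x_{kl}$ range over the generators of $\p_1, \p_2, \p_3$ respectively. Each such product has to be expressed as an $R$-combination of $f_1, \ldots, f_4$, which amounts to a systematic Gr\"obner-basis reduction in $R$.

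The principal obstacle is exactly this last computational step. While each triple product is individually a routine reduction modulo $J$, the sheer number of them, combined with parametric coefficients in $a,b$ whose cancellation hinges on the identity $a^2 = 3b$, makes the bookkeeping cumbersome. Exploiting the evident symmetries among the $f_j$ and among the generators of each $\p_i$ should reduce the effective number of distinct cases; in practice the remaining identities would most likely be confirmed with computer algebra assistance.
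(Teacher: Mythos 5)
Your proposal is correct and follows essentially the same route as the paper: verify $J\subseteq\p_i$ for each $i$ by substitution (using $a^2=3b$), deduce $\sqrt{J}\subseteq\p_1\cap\p_2\cap\p_3$ from primality, and reduce the reverse inclusion to $\p_1\cdot\p_2\cdot\p_3\subseteq J$, checked on the $48$ triple products of generators via a Gr\"obner basis of $J$. The only (harmless) difference is that you justify the last reduction with the identity $\sqrt{\p_1\cdot\p_2\cdot\p_3}=\p_1\cap\p_2\cap\p_3$ directly, whereas the paper routes through $I(V(\cdot))$; both arrive at the same computational verification, which the paper likewise delegates to normal-form reduction against an explicit Gr\"obner basis.
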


\begin{proof}
It is straightforward to verify that  $f_1,\dots,f_4$ belong to each $\p_i$. For instance, to see that $f_1\in\p_2$, we only need to check that $f_1\equiv 0$ modulo $\p_2$. In $R/\p_2$, 
$x_{11}\equiv a-2x_{22}, x_{12}\equiv b-ax_{22}$, and $x_{21}\equiv\frac{a}{b}x_{22} - 1$. Thus
\begin{eqnarray*}
f_1 & = & a(b-ax_{22})\left(\frac{a}{b}x_{22} - 1\right) -b(a-2x_{22})\left(\frac{a}{b}x_{22} - 1\right) + bx_{22} + (a-2x_{22})(b-ax_{22}) \\
 & \equiv & \left(a^2x_{22}-ab-\frac{a^3}{b}x_{22}^2+a^2x_{22}\right)-(a^2x_{22}-ab-2ax_{22}^2+ 2bx_{22})+ bx_{22}\\
 && +(ab-a^2x_{22}-2bx_{22}+2ax_{22}^2)\\
 &=&-\frac{3ab}{b}x_{22}^2+a^2x_{22}+2ax_{22}^2- 2bx_{22}+ bx_{22}+ab-a^2x_{22}-2bx_{22}+2ax_{22}^2\\
 &=&ax_{22}^2-a^2x_{22}+ab=a\cdot h_4
\end{eqnarray*}
which is equivalent to $0$ modulo $\p_2$. These direct verifications imply that $J\subseteq \p_1\cap\p_2\cap\p_3$. 
By Lemma \ref{lem5.3}, we have seen that all $\p_i$ are prime ideals. Thus $\sqrt{J}\subseteq \p_1\cap\p_2\cap\p_3$ and 
$V(\p_1\cap\p_2\cap\p_3)\subseteq V(\sqrt{J})$.
By \cite[Lemma 1.1.5]{Gec03}, we have
$$\sqrt{J}=I(V(\sqrt{J}))\subseteq I(V(\p_1\cap\p_2\cap\p_3))=I(V(\p_1\cdot\p_2\cdot\p_3))=\sqrt{\p_1\cdot\p_2\cdot\p_3}.$$
It follows from Lemma \ref{lem5.3} that $\sqrt{\p_1\cdot\p_2\cdot\p_3}=\p_1\cdot\p_2\cdot\p_3$. Thus, it suffices to show that $\p_1\cdot\p_2\cdot\p_3\subseteq J.$ By \cite[Proposition 6, page 191]{CLO15}, the set $\A:=\{g_k\cdot x_{ij}\mid 1\leqslant k\leqslant 3, 1\leqslant i,j\leqslant 2\}$ can generate $\p_1\cdot \p_3$ and the set $\B:=\{a\cdot h_i\mid a\in\A,1\leqslant i\leqslant 4\}$
could be a generating set of $\p_1\cdot\p_2\cdot\p_3$. 

To show that the class of each element in $\B$ in $R/J$ is zero, we need to show that each element in $\B$ belongs to $J$.
To do this, we may use a method of Gr\"{o}bner basis. Let us choose the lexicographic monomial ordering with $x_{11}>x_{12}>x_{21}>x_{22}$ for $R$. Note that $a^2=3b$ and $a,b$ are not zero. Using the Buchberger's algorithm (see for example, \cite[Algorithm 1.1.9]{DK15}), we may find a 
Gr\"{o}bner basis $\{s_1,s_2,\dots,s_6\}$ for $J$, where
\begin{eqnarray*}
s_1 & := & x_{11} + \frac{1}{b}x_{12}x_{22} - \frac{a}{b}x_{12} - x_{21}x_{22} + ax_{21} + \frac{a}{b}x_{22}^2 - \frac{a^2}{b}x22 \\
s_2 & := &  x_{12}^2 + ax_{12}x_{22} - bx_{12} - b^2x_{21}^2 + abx_{21}x_{22} - b^2x_{21}\\
s_3 & := & x_{12}x_{21} - \frac{a}{b}x_{12}x_{22} + x_{12} - \frac{2b}{a}x_{21}^2x_{22} + 2x_{21}x_{22}^2 + \frac{b}{a}x_{21}x_{22} -bx_{21} - \frac{2}{a}x_{22}^3 + \frac{b}{a}x_{22}\\
s_4 & := & x_{12}x_{22}^2 - ax_{12}x_{22} + bx_{12} + b^2x_{21}^2 - bx_{21}x_{22}^2 + ax_{22}^3 - a^2x_{22}^2 + abx_{22}\\
s_5 & := & x_{21}^3 - \frac{4}{a}x_{21}^2x_{22} + x_{21}^2 + \frac{2}{b}x_{21}x_{22}^2 - \frac{4}{a}x_{21}x_{22} + x_{21} - 
        \frac{1}{ab}x_{22}^3 + \frac{1}{b}x_{22}^2 - \frac{1}{a}x_{22}\\
s_6 & := &   x_{21}^2x_{22}^2 - ax_{21}^2x_{22} - \frac{a}{b}x_{21}x_{22}^3 + 4x_{21}x_{22}^2 - ax_{21}x_{22} + 
        \frac{1}{b}x_{22}^4 - \frac{2a}{b}x_{22}^3 + 4x_{22}^2 - ax_{22}.     
\end{eqnarray*}
By \cite[Algorithm 1.1.7]{DK15}, it suffices to verify that the normal form of each element in $\B$ with respect to 
$\{s_1,s_2,\dots,s_6\}$ is zero, which can be done by directly applying \cite[Algorithm 1.1.6]{DK15}.  A MAGMA code for computing Gr\"{o}bner bases $\{s_1,s_2,\dots,s_6\}$ of $J$ and the corresponding normal forms of elements in $\B$ can be found in Appendix.

Therefore, $\p_1\cdot\p_2\cdot\p_3$ is contained in $J$, and $\sqrt{J}=\p_1\cap\p_2\cap\p_3$.
\end{proof}

We are ready to complete the proof of Theorem \ref{mt3}.

\begin{proof}[Proof of Theorem \ref{mt3}]
By Lemma \ref{lem5.4},  $\D_{A_3}(\F_q)=V(\sqrt{J})=V(\p_1\cap\p_2\cap\p_3)=V(\p_1)\cup V(\p_2)\cup V(\p_3).$ Clearly, $V(\p_3)=\{0\}$. Moreover, $V(\p_2)=\emptyset$. To see that, $x_{11},x_{12}$, and $x_{21}$ are all determined by $x_{22}$ via $h_1,f_2,f_3$ respectively, but $h_4=x_{22}^2 -ax_{22} + b$ is irreducible over $\F_q$, thus there are no $x_{22}\in\F_q$
such that $h_4=0$. In other words, $V(\p_2)=\emptyset$. 

Let us look at $V(\p_1)$. As $x_{11}$ and $x_{12}$ are determined by $x_{21}$ and $x_{22}$ via $g_1$ and $g_2$, we only need to focus on the equation $g_3=0$, i.e.,
\begin{equation}
\label{eq5.3}
x_{21}^2 -\left(\frac{a}{b}x_{22} -1\right)x_{21} +\left(\frac{1}{b}x_{22}^2 -\frac{a}{b}x_{22} + 1\right)=0.
\end{equation}
We have seen in Lemma \ref{lem5.3} that the discriminant of this equation is $\frac{(x_{22}-a)^2}{\Delta}$. Since $\Delta$
is a non-square element, it follows that (\ref{eq5.3}) has solutions if and only if $x_{22}=a$. Clearly, (\ref{eq5.3}) has at least one solution because $A_3\in V(\p_1)$.  Hence, $x_{22}=a$. Substituting back to (\ref{eq5.3}) obtains
$$x_{21}^2 -\left(\frac{a}{b}\cdot a -1\right)x_{21} +\left(\frac{1}{b}\cdot a^2 -\frac{a}{b}\cdot a + 1\right)=0.$$
Together with the assumption that $\Delta=-b$, this equation can be simplified as 
$$x_{21}^2-2x_{21}+1=(x_{21}-1)^2=0.$$
Hence, $x_{21}=1$. Using $g_1=g_2=0$, we deduce that $x_{11}=0$ and $x_{12}=-b$. This implies that
$V(\p_1)=\{A_3\}.$ Therefore, $\D_{A_3}(\F_q)=\{0,A_3\}$ and $|\D_{A_3}(\F_q)|=2$.
\end{proof}

\begin{rem}{\rm
In Section \ref{sec6} below, we will give a discussion and conjecture on the geometry of $\D_{A_3}(\F_q)$ when $\Delta\neq -b$. 
\hbo}\end{rem}

\section{Examples} \label{sec6}
\setcounter{equation}{0}
\renewcommand{\theequation}
{6.\arabic{equation}}
\setcounter{theorem}{0}
\renewcommand{\thetheorem}
{6.\arabic{theorem}}

\noindent This last section contains some examples and remarks for small primes $q$.

\begin{exam}{\rm
Suppose that $\F_q=\F_2$ and $A_1=\begin{pmatrix}
    1  &  0  \\
    0  & 0 
\end{pmatrix}$. It follows from Proposition \ref{prop3.1} that $|\D_{A_1}(\F_2)|=8$. More precisely, 
$$\D_{A_1}(\F_2)=\left\{\begin{pmatrix}
    0  &  0  \\
    0  & 0 
\end{pmatrix},\begin{pmatrix}
    0  &  0  \\
    1  & 0 
\end{pmatrix},\begin{pmatrix}
    0  &  0  \\
    0  & 1 
\end{pmatrix},\begin{pmatrix}
    0  &  0  \\
    1  & 1 
\end{pmatrix} ,\begin{pmatrix}
    0  &  1  \\
    0  & 0 
\end{pmatrix}, \begin{pmatrix}
    0  &  1  \\
    0  & 1 
\end{pmatrix}, \begin{pmatrix}
    1  &  0  \\
    0  & 0 
\end{pmatrix}, \begin{pmatrix}
    1  &  0  \\
    0  & 1 
\end{pmatrix}\right\}.$$
Going back to Proposition \ref{prop3.1}, we can see that in this case, $D_1$ consists of the first four matrices; the fifth and the sixth matrices form the set
$D_2\setminus D_1$; and the last two matrices constitute $D_3$.
\hbo}\end{exam}

\begin{exam}{\rm
Suppose that $\F_q=\F_3$ and $A_1=\begin{pmatrix}
    1  &  0  \\
    0  & 2 
\end{pmatrix}$. As $\updelta=c_1^2-c_1c_2+c_2^2=1^2-1\cdot 2+2^2=3=0$ in $\F_3$.
By Proposition \ref{prop3.3} (1) that $\D_{A_1}(\F_3)$ consists of the following 8 matrices: 
$$\left\{\begin{pmatrix}
    0  &  0  \\
    0  & 0 
\end{pmatrix},\begin{pmatrix}
    1  &  0  \\
    0  & 0 
\end{pmatrix},\begin{pmatrix}
    0  &  0  \\
    0  & 2 
\end{pmatrix},\begin{pmatrix}
    1  &  0  \\
    0  & 2 
\end{pmatrix} ,\begin{pmatrix}
    1  &  0  \\
    1  & 2 
\end{pmatrix}, \begin{pmatrix}
    1  &  0  \\
    2  & 2
\end{pmatrix}, \begin{pmatrix}
    1  &  1  \\
    0  & 2 
\end{pmatrix}, \begin{pmatrix}
    1  &  2  \\
    0  & 2 
\end{pmatrix}\right\}.$$
Suppose that $\F_q=\F_7$ and $A_1=\begin{pmatrix}
    1  &  0  \\
    0  & 2 
\end{pmatrix}$. Note that $\updelta=3\neq 0$ in $\F_7$ and 
$$\frac{c_2^2}{c_2-c_1}=4\textrm{ and }\frac{c_1^2}{c_1-c_2}=-1=6.$$
Hence, Proposition \ref{prop3.3} (2) implies that the following 10 matrices:
 $$\left\{\begin{pmatrix}
    0  &  0  \\
    0  & 0 
\end{pmatrix},\begin{pmatrix}
    1  &  0  \\
    0  & 0 
\end{pmatrix},\begin{pmatrix}
    0  &  0  \\
    0  & 2 
\end{pmatrix},\begin{pmatrix}
    1  &  0  \\
    0  & 2 
\end{pmatrix} ,\begin{pmatrix}
    4  &  1  \\
    1  & 6 
\end{pmatrix}, \begin{pmatrix}
    4  &  2  \\
    4  & 6
\end{pmatrix}, \begin{pmatrix}
    4  &  3  \\
    5  & 6 
\end{pmatrix}, \begin{pmatrix}
    4  &  4  \\
    2  & 6 
\end{pmatrix}, \begin{pmatrix}
    4  &  5  \\
    3  & 6 
\end{pmatrix}, \begin{pmatrix}
    4  &  6  \\
    6  & 6 
\end{pmatrix}\right\}.$$
form the affine variety $\D_{A_1}(\F_7)$, where $ab=1$ in $E_3$ appeared in Proposition \ref{prop3.3} (2).
\hbo}\end{exam}

\begin{exam}{\rm
Suppose that $\F_q=\F_5$ and $A_2=\begin{pmatrix}
    2  &  1  \\
    0  & 2 
\end{pmatrix}$. By Proposition \ref{prop4.2}, we see that $|\D_{A_2}(\F_5)|=7$. In fact, 
$$\D_{A_2}(\F_5)=\left\{\begin{pmatrix}
    0  &  0  \\
    0  & 0 
\end{pmatrix},\begin{pmatrix}
    2  &  1  \\
    0  & 2
\end{pmatrix},\begin{pmatrix}
    0  &  1  \\
    1  & 4 
\end{pmatrix},\begin{pmatrix}
    1  &  4  \\
    1  & 3 
\end{pmatrix} ,\begin{pmatrix}
    2  &  0  \\
    1  & 2 
\end{pmatrix}, \begin{pmatrix}
    3  &  4  \\
    1  & 1 
\end{pmatrix}, \begin{pmatrix}
    4  &  1  \\
    1  & 0 
\end{pmatrix}\right\}.$$
Similarly, if $A_2=\begin{pmatrix}
    3  &  1  \\
    0  & 3 
\end{pmatrix}$, then the following 7 matrices
$$\left\{\begin{pmatrix}
    0  &  0  \\
    0  & 0 
\end{pmatrix},\begin{pmatrix}
    3  &  1  \\
    0  & 3
\end{pmatrix} ,\begin{pmatrix}
    2  &  4  \\
    1  & 4 
\end{pmatrix}, \begin{pmatrix}
    3  &  0  \\
    1  & 3 
\end{pmatrix}, \begin{pmatrix}
    4  &  4  \\
    1  & 2 
\end{pmatrix},\begin{pmatrix}
    0  &  1  \\
    1  & 1 
\end{pmatrix},\begin{pmatrix}
    1  &  1  \\
    1  & 0 
\end{pmatrix}\right\}$$
constitute the variety $\D_{A_2}(\F_5)$.
\hbo}\end{exam}

\begin{rem}\label{rem6.4}
{\rm
In understanding the geometry of $\D_{A_3}(\F_q)$, we have seen in Section \ref{sec5} that the relationship between $\Delta$ and $-b$ plays a key role. We denote by $\nabla_0(q)$ the set of all pairs $(a,b)$ such that $\Delta$ is a non-square in $\F_q$
and $\Delta=-b$ and denote by $\nabla_1(q)$ the set of all pairs $(a,b)$ such that $\Delta$ is a non-square in $\F_q$
and $\Delta\neq -b$. For small $q$, we have
\begin{equation*} 
\begin{aligned}
\nabla_0(3)&=\{(0,1)\} & \nabla_1(3)&=\{(1,2),(2,2)\} \\
\nabla_0(5)&=\{(1,2),(2,3),(3,3),(4,2)\} & \nabla_1(5)&=\{(0,2),(0,3),(1,1),(2,4),(3,4),(4,1)\} \\
\end{aligned}
\end{equation*}
and $\nabla_0(7)=\emptyset$ and $|\nabla_1(7)|= 21$. Moreover,
$$\nabla_0(11)=\{(1,4),(2,5),(3,3),(4,9),(5,1),(6,1),(7,9),(8,3),(9,5),(10,4)\}$$
and the cardinality of $\nabla_1(11)=45.$
\hbo}\end{rem}

We have attempted to describe the geometric structure of $\D_{A_3}(\F_q)$ and compute it cardinality for $(a,b)\in \nabla_1(q)$. Our experiments using  MAGMA \cite{BCP97} lead us to make the following conjecture.

\begin{conj}\label{conj6.5}
Assume that $\cha(\F_q)\neq 2$ and $\Delta=a^2-4b$ is a non-square in $\F_q$. 
If $\Delta\neq -b$, then $|\D_{A_3}(\F_q)|=q+3$.
\end{conj}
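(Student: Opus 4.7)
\textbf{Proof proposal for Conjecture \ref{conj6.5}.}

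The plan is to use Galois descent to reduce the count to one over $\F_{q^2}$, where $A_3$ becomes diagonalizable and Theorem \ref{mt1}(2) applies directly.

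\emph{Reduction via $\F_{q^2}$.} Let $\lambda,\bar\lambda \in \F_{q^2}\setminus\F_q$ be the Galois-conjugate roots of $x^2-ax+b$ and set $A_1' := \dia(\lambda,\bar\lambda)$. Over $\F_{q^2}$, the matrices $A_3$ and $A_1'$ are similar, so Proposition \ref{prop2.1} supplies $P \in \GL_2(\F_{q^2})$ with $P^{-1}A_3P = A_1'$ and a bijection $\upvarphi_P : \D_{A_3}(\F_{q^2}) \ra \D_{A_1'}(\F_{q^2})$. Let $\sigma$ denote the nontrivial element of $\mathrm{Gal}(\F_{q^2}/\F_q)$ acting entry-wise. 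Then $\D_{A_3}(\F_q)$ is the $\sigma$-fixed subset of $\D_{A_3}(\F_{q^2})$, and under $\upvarphi_P$ this corresponds to
$$\left\{ Y \in \D_{A_1'}(\F_{q^2}) : \sigma(Y) = Q^{-1} Y Q \right\}, \qquad Q := P^{-1}\sigma(P).$$
Because $\sigma(A_1') = S A_1' S^{-1}$ with $S := \sm{0}{1}{1}{0}$ and because the centralizer of $A_1'$ in $\GL_2(\F_{q^2})$ is the diagonal torus, a short calculation shows that $Q = DS$ for some invertible diagonal $D = \dia(d_1,d_2)$.

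\emph{Classification over $\F_{q^2}$.} Here the relevant Theorem \ref{mt1}(2) parameter is $\updelta = \lambda^2 - \lambda\bar\lambda + \bar\lambda^2 = a^2 - 3b = \Delta + b$, so $\Delta \neq -b$ is exactly the condition $\updelta \neq 0$. Theorem \ref{mt1}(2) then yields
$$\D_{A_1'}(\F_{q^2}) \;=\; \{0,\ \dia(\lambda,0),\ \dia(0,\bar\lambda),\ A_1'\} \,\cup\, E_3,$$
where $E_3$ consists of matrices with fixed diagonal $y_{11} = \bar\lambda^2/(\bar\lambda-\lambda)$, $y_{22} = \lambda^2/(\lambda-\bar\lambda)$ and off-diagonal constraint $y_{12}y_{21} = c$, with $c := -\lambda\bar\lambda\cdot\updelta/(\lambda-\bar\lambda)^2$. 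Note $c \in \F_q^\times$ since $\lambda\bar\lambda = b$, $(\lambda-\bar\lambda)^2 = \Delta$, and $\updelta$ all lie in $\F_q^\times$.

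\emph{Counting twisted fixed points.} Expanding $\sigma(Y) = Q^{-1}YQ$ with $Q = DS$ gives the four scalar conditions $\sigma(y_{11}) = y_{22}$, $\sigma(y_{22}) = y_{11}$, $\sigma(y_{12}) = (d_1/d_2)y_{21}$, $\sigma(y_{21}) = (d_2/d_1)y_{12}$; iterating the last two forces $d_1/d_2 \in \F_q^\times$. Applied to each stratum: $Y = 0$ and $Y = A_1'$ are fixed, corresponding to $X = 0$ and $X = A_3$; the pair $\dia(\lambda,0)$, $\dia(0,\bar\lambda)$ forms a single twisted orbit of length two and contributes nothing over $\F_q$. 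On $E_3$ the diagonal conditions are automatic, and the off-diagonal relations combined with $y_{12}y_{21} = c$ collapse to the single norm equation $N_{\F_{q^2}/\F_q}(y_{12}) = c\cdot(d_1/d_2) \in \F_q^\times$. Since the norm map $\F_{q^2}^\times \ra \F_q^\times$ is surjective with kernel of order $q+1$, this contributes exactly $q+1$ solutions. Summing gives $|\D_{A_3}(\F_q)| = 2 + (q+1) = q+3$.

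\emph{Main obstacle.} The delicate step is the first one: pinning down the Frobenius cocycle $Q$ up to the freedom $P \mapsto PD'$ for $D'$ in the centralizer of $A_1'$, and confirming the $DS$-shape. This can be handled either by choosing $P$ explicitly from eigenvectors of $A_3$ over $\F_{q^2}$ and computing $Q$ directly, or by checking that admissible choices of $P$ alter $Q$ only by a $\sigma$-coboundary (hence yield the same fixed-point count). The remainder is bookkeeping with the norm $N_{\F_{q^2}/\F_q}$. An alternative, computationally heavier route would parallel Section \ref{sec5}, directly decomposing $\sqrt{J}$ via a Gr\"obner basis and verifying in the general case that the only change from the $\Delta = -b$ analysis is the discriminant $\frac{\Delta}{b^2}x_{22}^2 + \frac{2a}{b}x_{22} - 3$ of the analogue of $g_3$ acquiring exactly one non-trivial $\F_q$-root, producing a $1$-dimensional component with $q+1$ points and two additional isolated solutions.
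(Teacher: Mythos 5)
The paper does not prove this statement: it is left as Conjecture \ref{conj6.5}, supported only by MAGMA verification for $q=3,5,7,11$. Your Galois-descent argument is, as far as I can check, a correct proof, and it would upgrade the conjecture to a theorem. The key identities all verify: from $A_1'=P^{-1}A_3P$ one gets $Q^{-1}A_1'Q=\sigma(A_1')=SA_1'S$, so $QS$ centralizes $A_1'$ and (since $\lambda\neq\bar\lambda$) is diagonal, giving $Q=DS$; the cocycle relation $\sigma(Q)Q=I$ forces $d_1\sigma(d_2)=d_2\sigma(d_1)=1$ and hence $t:=d_1/d_2\in\F_q^\times$; the parameter $\updelta=\lambda^2-\lambda\bar\lambda+\bar\lambda^2=a^2-3b=\Delta+b$ identifies $\Delta\neq -b$ with the hypothesis $\updelta\neq0$ of Proposition \ref{prop3.3}(2); and on $E_3$ the twisted-fixed-point condition collapses to $N_{\F_{q^2}/\F_q}(y_{12})=tc$ with $tc\in\F_q^\times$, contributing exactly $q+1$ points, for a total of $2+(q+1)=q+3$. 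Two details should be made explicit in a final write-up. First, Proposition \ref{prop3.3} is stated for distinct nonzero elements of $\F_q$, but you invoke it over $\F_{q^2}$ with $c_1=\lambda$, $c_2=\bar\lambda$; its proof is field-independent, so this is legitimate, but it must be said. Second, one should note that the twisted Frobenius $\tau(Y)=Q\sigma(Y)Q^{-1}$ genuinely preserves $\D_{A_1'}(\F_{q^2})$ and each stratum of the decomposition (it fixes the diagonal of $E_3$ and preserves $y_{12}y_{21}=c$ because $c\in\F_q$), which is what licenses the stratum-by-stratum count. The "main obstacle" you flag is in fact already resolved by your own computation of $Q$. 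As a sanity check, the same machinery applied with $\updelta=0$ (Proposition \ref{prop3.3}(1)) makes $\tau$ swap $E_1$ and $E_2$ and recovers $|\D_{A_3}(\F_q)|=2$, i.e., Theorem \ref{mt3}, by a route entirely different from (and considerably shorter than) the Gr\"obner-basis argument of Section \ref{sec5}.
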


Using MAGMA, we also have been able to computationally  verify this conjecture  for values of
$(a,b)\in \nabla_1(q)$ for $q=3,5,7$ and $11$. 

\vspace{5mm}
\noindent \textbf{Appendix}. Here we provide a MAGMA code for Gr\"obner basis and normal form computation
appeared in Lemma \ref{lem5.4}.

\begin{lstlisting}%[caption={Gröbner basis and  computation in MAGMA}, label={code:GB}]
p := 5;
F<a> := GF(p);
b := a^2 / 3;

R<x11, x12, x21, x22> := PolynomialRing(F, 4);

f1 := a*x12*x21 - b*x11*x21 + b*x22 + x11*x12;
f2 := a*x21*x22 - a*x22 - b*x21^2 + x11*x22 - x12;
f3 := a*b*x22 + a*x12*x22 - b^2*x21 - b*x11*x22 + x12^2;
f4 := a^2*x22 - a*b*x21 + a*x12 - a*x22^2 - b*x11 + b*x21*x22 - x12*x22;

J := ideal<R | f1, f2, f3, f4>;
GJ := GroebnerBasis(J);
GJ;

g1 := x11 + x22 - a;
h1 := x11 + 2*x22 - a;

d := g1 * x11 * h1;
Nd := NormalForm(d, GJ);
Nd;
\end{lstlisting}

\vspace{5mm}
\noindent \textbf{Acknowledgements}.  The symbolic computation language MAGMA \cite{BCP97} (http://magma. maths.usyd.edu.au/) was very helpful. This research was supported by NNSF of China under grant No. 12561003.
The authors would like to thank the three anonymous referees and the editor for their careful reading, constructive comments, and suggestions.

\begin{bibdiv}
  \begin{biblist}

\bib{Bax72}{article}{
   author={Baxter, Rodney J.},
   title={Partition function of the eight-vertex lattice model},
   journal={Ann. Physics},
   volume={70},
   date={1972},
   pages={193--228},
}

\bib{BCP97}{article}{
   author={Bosma, Wieb},
   author={Cannon, John},
   author={Playoust, Catherine},
   title={The Magma algebra system. I. The user language},
   journal={J. Symbolic Comput.},
   volume={24},
   date={1997},
   number={3-4},
   pages={235--265},
}

\bib{BFP99}{article}{
   author={Brenti, Francesco},
   author={Fomin, Sergey},
   author={Postnikov, Alexander},
   title={Mixed Bruhat operators and Yang-Baxter equations for Weyl groups},
   journal={Internat. Math. Res. Notices},
   date={1999},
   number={8},
   pages={419--441},
}
  
\bib{Car73}{article}{
   author={Carlitz, Leonard},
   title={The number of solutions of certain matric equations over a finite
   field},
   journal={Math. Nachr.},
   volume={56},
   date={1973},
   pages={105--109},
}

\bib{CJdR10}{article}{
   author={Cedo, Ferran},
   author={Jespers, Eric},
   author={del Rio, Angel},
   title={Involutive Yang-Baxter groups},
   journal={Trans. Amer. Math. Soc.},
   volume={362},
   date={2010},
   number={5},
   pages={2541--2558},
}

\bib{CCY19}{article}{
   author={Chen, Dongmei},
   author={Chen, Zhibing},
   author={Yong, Xuerong},
   title={Explicit solutions of the Yang-Baxter-like matrix equation for a
   diagonalizable matrix with spectrum contained in $\{1,\upalpha,0\}$},
   journal={Appl. Math. Comput.},
   volume={348},
   date={2019},
   pages={523--530},
}
  
\bib{CZ23}{article}{
   author={Chen, Yin},
   author={Zhang, Xinxin},
   title={A class of quadratic matrix equations over finite fields},
   journal={Algebra Colloq.},
   volume={30},
   date={2023},
   number={1},
   pages={169--180},
}

  \bib{CR25}{article}{
   author={Chen, Yin},
   author={Ren, Shan},
   title={Modular matrix invariants under some transpose actions},
   journal={Submitted to Finite Fields Appl. (with a positive report)},
   date={2025},
   pages={\texttt{arXiv:2504.12179}},
}

  \bib{CZ26}{article}{
   author={Chen, Yin},
   author={Zhang, Runxuan},
   title={Invariant theory and coefficient algebras of Lie algebras},
   journal={J. Algebra},
   volume={689},
   date={2026},
   pages={87--111},
}

\bib{Coh25}{article}{
  author={Cohen, Boaz},
 title={The generalized Yang-Baxter matrix equation $A(XA)^m=X(AX)^m$ over $\GL(2,\F)$},
  journal={To appear in J. Math. Sci. (N.Y.)},
  pages={https://doi.org/10.1007/s10958-025-07753-w},
  year={2025},
}

\bib{CLO15}{book}{
   author={Cox, David A.},
   author={Little, John},
   author={O'Shea, Donal},
   title={Ideals, varieties, and algorithms},
   series={Undergraduate Texts in Mathematics},
   edition={4},
   publisher={Springer, Cham},
   date={2015},
}

\bib{DK15}{book}{
   author={Derksen, Harm},
   author={Kemper, Gregor},
   title={Computational invariant theory},
   series={Encyclopaedia of Mathematical Sciences},
   volume={130},
   edition={Second enlarged edition},
   publisher={Springer, Heidelberg},
   date={2015},
}

\bib{GZ24}{article}{
   author={Gan, Yudan},
   author={Zhou, Duanmei},
   title={Iterative methods based on low-rank matrix for solving the
   Yang-Baxter-like matrix equation},
   journal={Comput. Appl. Math.},
   volume={43},
   date={2024},
   number={4},
   pages={Paper No. 241, 19},
}

\bib{Gec03}{book}{
   author={Geck, Meinolf},
   title={An introduction to algebraic geometry and algebraic groups},
   series={Oxford Graduate Texts in Mathematics},
   volume={10},
   publisher={Oxford University Press, Oxford},
   date={2003},
}

\bib{Hiv00}{article}{
   author={Hivert, Florent},
   title={Hecke algebras, difference operators, and quasi-symmetric
   functions},
   journal={Adv. Math.},
   volume={155},
   date={2000},
   number={2},
   pages={181--238},
}

\bib{Hod57}{article}{
   author={Hodges, John H.},
   title={Some matrix equations over a finite field},
   journal={Ann. Mat. Pura Appl. (4)},
   volume={44},
   date={1957},
   pages={245--250},
}

\bib{Hod58}{article}{
   author={Hodges, John H.},
   title={The matrix equation $X^{2}-I=0$ over a finite field},
   journal={Amer. Math. Monthly},
   volume={65},
   date={1958},
   pages={518--520},
}

\bib{Hod64}{article}{
   author={Hodges, John H.},
   title={A bilinear matrix equation over a finite field},
   journal={Duke Math. J.},
   volume={31},
   date={1964},
   pages={661--666},
}

\bib{Hod66}{article}{
   author={Hodges, John H.},
   title={An Hermitian matrix equation over a finite field},
   journal={Duke Math. J.},
   volume={33},
   date={1966},
   pages={123--129},
}

\bib{HM25}{article}{
   author={Huang, Ting},
   author={Miao, Shuxin},
   title={An improved zeroing neural network model for solving the
   time-varying Yang-Baxter-like matrix equation},
   journal={J. Math. Anal. Appl.},
   volume={545},
   date={2025},
   number={1},
   pages={Paper No. 129095, 16},
}

\bib{Lu22}{article}{
   author={Lu, Linzhang},
   title={Manifold expressions of all solutions of the Yang--Baxter-like
   matrix equation for rank-one matrices},
   journal={Appl. Math. Lett.},
   volume={132},
   date={2022},
   pages={Paper No. 108175, 7},
}

\bib{PDK22}{article}{
   author={Peretz, Yossi},
   author={Dotan, Maya},
   author={Kamienny, Aytan},
  title={An algorithm for simultaneous nonsymmetric algebraic Riccati equations over finite fields},
  journal={J. Inf. Secur. Appl.},
  volume={67},
  pages={103178},
  year={2022},
}

\bib{WWW24}{article}{
   author={Wang, Yunjie},
   author={Wu, Cuilan},
   author={Wu, Gang},
   title={Solutions of the Yang-Baxter-like matrix equation with $3\times 3$
   diagonalizable coefficient matrix},
   journal={Linear Multilinear Algebra},
   volume={72},
   date={2024},
   number={14},
   pages={2347--2367},
}

\bib{WZ01a}{article}{
   author={Wei, Hongzeng},
   author={Zhang, Yibin},
   title={The number of solutions to the alternate matrix equation over a
   finite field and a $q$-identity},
   journal={J. Statist. Plann. Inference},
   volume={94},
   date={2001},
   number={2},
   pages={349--358},
}

\bib{WZ01b}{article}{
   author={Wei, Hongzeng},
   author={Zheng, Xingfen},
   title={The number of solutions to the bilinear matrix equation over a
   finite field},
   journal={J. Statist. Plann. Inference},
   volume={94},
   date={2001},
   number={2},
   pages={359--369},
}

\bib{Yan67}{article}{
   author={Yang, Chen-Ning},
   title={Some exact results for the many-body problem in one dimension with
   repulsive delta-function interaction},
   journal={Phys. Rev. Lett.},
   volume={19},
   date={1967},
   pages={1312--1315},
}

  \end{biblist}
\end{bibdiv}
\raggedright
\end{document}